\def\d{\mathrm{d}}
\newcommand{\E}{\mathbb{E}}
\newcommand{\R}{\mathbb{R}}
\newcommand{\N}{\mathbb{N}}
\newcommand{\p}{\mathbb{P}}
\newcommand{\id}{\mathrm{1}}
\def\d{\mathrm{d}}
\newcommand{\be}{\begin{eqnarray}}
\newcommand{\ee}{\end{eqnarray}}
\newcommand{\by}{\begin{eqnarray*}}
\newcommand{\ey}{\end{eqnarray*}}
\newcommand{\bn}{\begin{enumerate}}
\newcommand{\en}{\end{enumerate}}
\newcommand{\bi}{\begin{itemize}}
\newcommand{\ei}{\end{itemize}}
\renewcommand{\ge}{\geqslant}
\renewcommand{\le}{\leqslant}
\renewcommand{\geq}{\geqslant}
\renewcommand{\leq}{\leqslant}
\renewcommand{\epsilon}{\varepsilon}
\theoremstyle{plain}
\newtheorem{theorem}{Theorem}
\newtheorem{corollary}[theorem]{Corollary}
\newtheorem{lemma}[theorem]{Lemma}
\newtheorem{proposition}[theorem]{Proposition}
\theoremstyle{definition}
\newtheorem{example}{Example}[section]
\theoremstyle{definition}
\numberwithin{equation}{section} \numberwithin{theorem}{section}
\renewcommand{\cite}{\citet}
\renewcommand{\cdots}{\dots}
\newcommand{\ignore}[1]{}
\begin{document}

\title{Reconciling Risk-Aversion Paradoxes in the Distribution-Free Newsvendor Problem: Scarf’s Rule Meets Dual Utility}


\author{Jonathan Yu-Meng Li$^{\dag}$, \;  Tiantian Mao$^{\dag\dag}$, \; Reza Valimoradi$^{\dag}$\\
\\
$^{\dag}$Telfer School of Management\\
University of Ottawa \\
Ottawa, Ontario, Canada K1N 6N5\\
\\
$^{\dag\dag}$ Department of Statistics and Finance, School of Management \\
University of Science and Technology of China\\
 Hefei, Anhui, China}

\maketitle

\begin{center}
\textbf{Abstract}
\end{center}

How should a risk-averse newsvendor order optimally under distributional ambiguity? Attempts to extend Scarf's celebrated distribution-free ordering rule using risk measures have led to conflicting prescriptions: CVaR-based models invariably recommend ordering less as risk aversion increases, while mean–standard deviation models--paradoxically--suggest ordering more, particularly when ordering costs are high. We resolve this behavioral paradox through a coherent generalization of Scarf’s distribution-free framework, modeling risk aversion via distortion functionals from dual utility theory. Despite the generality of this class, we derive closed-form optimal ordering rules for any coherent risk preference. These rules uncover a consistent behavioral principle: a more risk-averse newsvendor may rationally order more when overstocking is inexpensive (i.e., when the cost-to-price ratio is low), but will always order less when ordering is costly. Our framework offers a more nuanced, managerially intuitive, and behaviorally coherent understanding of risk-averse inventory decisions. It exposes the limitations of non-coherent models, delivers interpretable and easy-to-compute ordering rules grounded in coherent preferences, and unifies prior work under a single, tractable approach. We further extend the results to multi-product settings with arbitrary demand dependencies, showing that optimal order quantities remain separable and can be obtained by solving single-product problems independently.

\textbf{Key words:} Distribution-free newsvendor, Risk-averse newsvendor, Distributionally robust optimization, Distortion functionals

\section{Introduction}\label{sec:Intro}
The newsvendor problem is foundational in inventory theory, balancing overage and underage costs to determine an optimal order quantity. The classical formulation assumes that the demand distribution is known and the decision-maker is risk-neutral. Subsequent research has extended the model in two directions: addressing \textit{distributional ambiguity}, most notably through Scarf’s distribution-free approach (\cite{S58}) based on limited moment information, and incorporating \textit{risk aversion} (e.g., \cite{CR08}), particularly by leveraging the modern theory of risk measures.

The intersection of these two directions—risk aversion under distributional ambiguity—has drawn increasing attention in recent years (e.g., \cite{HKW15}). However, most existing studies have prioritized computational tractability, offering limited insight into how risk preferences reshape ordering behavior—particularly in Scarf-type distribution-free settings. Two notable exceptions are the works of \cite{NST11} and \cite{HDZ14}, which derive closed-form solutions under specific risk measures: CVaR and mean–standard deviation, respectively. Yet their conclusions diverge sharply. While \cite{NST11} find that greater risk aversion invariably leads to smaller orders, \cite{HDZ14} paradoxically show that under high cost-to-price ratios, greater aversion can lead to larger orders (see Figure~\ref{fig:two_panel_with_legend} for a visual illustration of this striking contrast). This apparent contradiction—should more risk aversion lead to ordering more or less?—highlights a deeper gap in our understanding and motivates the need for a unified, behaviorally coherent framework.\\

\begin{figure}[htbp]
  \centering
  \begin{tabular}{@{}c@{\hspace{1em}}c@{}}
    \raisebox{-1cm}{\rotatebox{90}{\textbf{Optimal order}}}
    &
    \begin{minipage}{0.7\textwidth}
      \centering

      \begin{subfigure}[b]{0.44\textwidth}
        \centering
        \caption*{\textbf{CVaR} (\cite{NST11}) }
        \includegraphics[width=\linewidth]{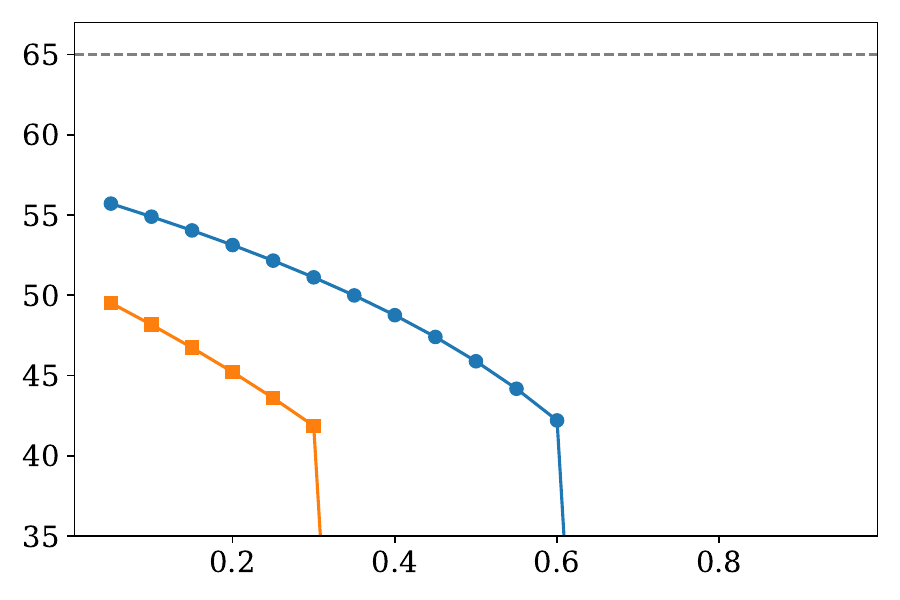}
      \end{subfigure}\hfill
      \begin{subfigure}[b]{0.44\textwidth}
        \centering
        \caption*{\textbf{Mean StD} (\cite{HDZ14})}
        \includegraphics[width=\linewidth]{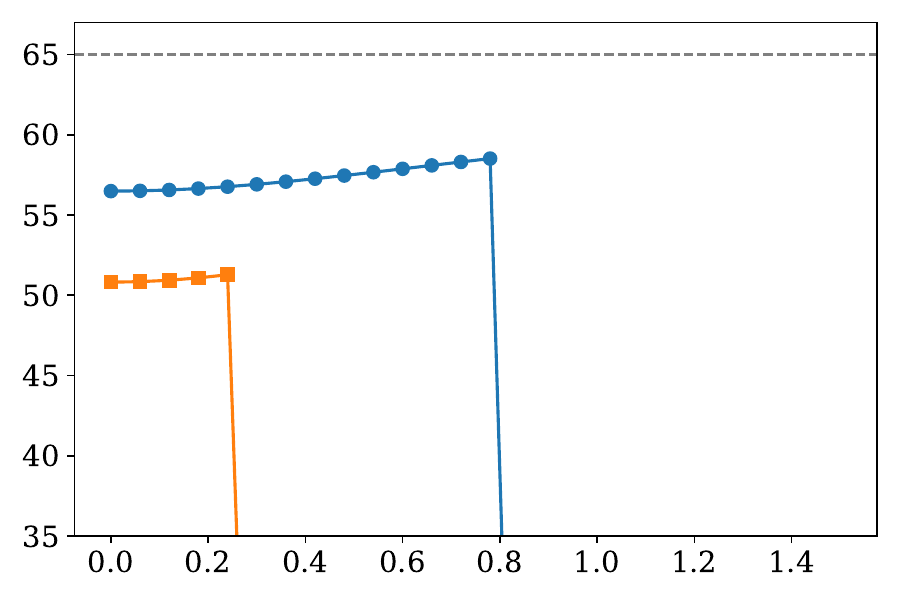}
      \end{subfigure}

      
      \includegraphics[height=0.4cm]{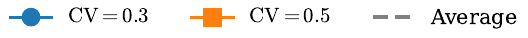}

     
      \textbf{Risk aversion}
    \end{minipage}
  \end{tabular}

  \caption{Optimal order quantities under two risk measures, with a cost-to-price ratio of $0.7$ and a coefficient of variation ${\rm CV} = \sigma / \mu$, where $\mu$ and $\sigma$ denote the mean and standard deviation of demand, respectively.
\textbf{(a) CVaR:} the order quantity decreases with increasing risk aversion.
\textbf{(b) Mean-standard deviation:} the order quantity increases with increasing risk aversion.
}
  \label{fig:two_panel_with_legend}
\end{figure}
\vspace{-1.8em}

We resolve this behavioral paradox by generalizing Scarf’s classical ordering rule through the lens of dual utility theory, specifically via distortion functionals—a rich, axiomatic class of coherent risk measures rooted in Yaari’s dual theory of choice (\cite{Y87}). Under a competing set of axioms to those of expected utility, dual utility theory shows that any coherent risk preference can be represented through a distortion functional, offering exceptional generality. This class includes CVaR as a special case and captures a broad spectrum of behaviorally grounded preferences, all while preserving desirable properties such as monotonicity, subadditivity, and comonotonic additivity. Despite this generality, we derive ordering rules in closed-form for arbitrary coherent preferences. These expressions—though more intricate than Scarf’s original rule—explicitly reveal how the shape of the distortion function governs optimal ordering behavior under ambiguity.

Crucially, our analysis reveals a behavioral pattern that is more nuanced than the monotonic prescriptions of CVaR (\cite{NST11}) and more coherent than the paradoxical recommendations of mean–standard deviation models (\cite{HDZ14})—such as ordering more when costs are high. We find that the relationship between risk aversion and the optimal order quantity hinges fundamentally on the cost-to-price ratio: when overstocking is relatively inexpensive, more risk-averse newsvendors may rationally order more to hedge against lost sales. As the cost-to-price ratio increases, however, ordering behavior transitions toward the conventional pattern—ordering less with greater aversion—thereby reconciling the seemingly conflicting findings in \cite{NST11} and \cite{HDZ14}. While \cite{HDZ14} attribute the anomalous results under the mean–standard deviation model to distributional ambiguity, we show that the deeper issue lies in the incoherence of the risk measure itself. Replacing standard deviation with a coherent alternative—such as deviation from the median—restores behavioral consistency and underscores the importance of coherence in modeling risk preferences under ambiguity. 

Beyond reconciling prior contradictions, our findings also challenge the conventional wisdom in classical risk-averse models without ambiguity (e.g., \cite{GT07} and \cite{CR08}), which predict that greater risk aversion invariably leads to smaller orders. In contrast, our results reveal that this monotonic pattern can break down in the presence of ambiguity: more risk-averse decision-makers may be incentivized to order more when overstocking is sufficiently inexpensive. This richer and more flexible behavioral response highlights the broader insight our framework offers into risk-averse inventory decisions under uncertainty.

Finally, while distribution-free risk-averse newsvendor problems are generally challenging in multi-product settings due to the nonlinearity of risk measures, our dual utility framework extends naturally to such cases with arbitrary demand dependencies. By leveraging the structure of the worst-case distribution, we show that the optimal solution decomposes into independent single-product problems—allowing the behavioral insights and tractability of the single-product setting to carry over seamlessly.

\medskip
\noindent \textbf{Summary of contributions.} This paper offers four main contributions:
\begin{enumerate}
\item We generalize Scarf’s classical rule to a rich, axiomatic class of coherent risk preferences using distortion functionals rooted in dual utility theory.
\item We derive closed-form ordering rules for arbitrary coherent distortion functionals, enabling tractable, interpretable solutions under broad behavioral specifications.
\item Our framework reconciles prior contradictory findings on risk aversion in distribution-free newsvendor models, clarifying when and why ordering behavior may deviate from conventional monotonic patterns.
\item We extend the model to multi-product settings with arbitrary demand dependencies, showing that the optimal solution decomposes across products—preserving both insight and computational simplicity.
\end{enumerate}

\section{Problem Formulation}  
Building on Scarf’s classical distribution-free setup (\cite{S58}), we study a single-period newsvendor problem where the demand distribution is unknown but specified by its first two moments: mean $\mu$ and standard deviation $\sigma$. Let $p$ and $c$ denote the unit resale price and ordering cost, with $p > c > 0$. The decision-maker selects an order quantity $x \geq 0$ before demand $S \sim F$ is realized. The realized profit is
\[ 
\psi_0(x,p,  c) :=  p \min\{S, x\} - c x.
\]
If unsold inventory can be salvaged at a unit value $s > 0$, then the effective resale price and ordering cost adjust to $p - s$ and $c - s$, respectively.\footnote{In this case, the profit becomes $(p - s)\min\{S, x\} - (c - s)x$.} We denote the set of all cumulative distribution functions (cdfs)   on $\mathbb{R}_+$  consistent with the specified moments by
\[
\mathcal{S}(\mu, \sigma) = \left\{ F ~\middle|~ F \text{ is a cdf on } \mathbb{R}_+,~ \mathbb{E}_F[S] = \mu,~ \mathbb{E}_F[S^2] = \mu^2 + \sigma^2 \right\}.
\]

Scarf’s classical solution maximizes the worst-case expected profit over $\mathcal{S}(\mu, \sigma)$, but it implicitly assumes risk neutrality, evaluating uncertainty solely through the mean of the profit. While prior risk-averse extensions—such as those based on CVaR (\cite{NST11}) and mean–standard deviation (\cite{HDZ14})—highlight the need to move beyond risk-neutral expected value, their contradictory prescriptions (as discussed in Section~\ref{sec:Intro}) call for a more coherent and behaviorally grounded perspective.

Seeking such a foundation, we generalize Scarf's distribution-free setting to risk-averse models defined via \emph{distortion functionals}—a class of preference models that reweight probabilities to reflect attitudes toward uncertainty. This generalization is rooted in Yaari’s dual theory of choice (\cite{Y87}), which offers a compelling alternative to expected utility by showing that, under a distinct set of axioms, preferences can be represented through distorted expectations.
\[
\rho_h(X) = \int_0^1 \operatorname{VaR}_\alpha(X)\, \d h(\alpha),
\]
captures the decision-maker's risk preferences. Here, $h: [0,1] \rightarrow [0,1]$ is a non-decreasing, concave function with $h(0) = 0$ and $h(1) = 1$, and $\operatorname{VaR}_\alpha(X) := \inf\{x : \mathbb{P}(X \le x) \ge \alpha\}$. Concavity of $h$ emphasizes lower quantiles, reflecting aversion to unfavorable outcomes. Within this setting, the risk-averse newsvendor seeks to maximize the worst-case distorted profit:
\[
\max_{x \geq 0} \inf_{F \in \mathcal{S}(\mu, \sigma)} \rho^F_h(\psi_0(x, p, c)), \quad \text{(with concave } h).
\]

While the concavity of the distortion function $h$ captures aversion to poor profit outcomes, it is mathematically equivalent---and increasingly common in practice---to frame risk aversion from a loss perspective. This viewpoint connects directly to the theory of coherent risk measures, where distortion-based functionals such as CVaR and other spectral measures serve as canonical examples. In this setting, risk is assessed via a convex distortion function applied to losses, reflecting aversion to severe downsides and ensuring coherence. To formalize this, we define the loss function
\[
\psi(x, p, c) := -\psi_0(x, p, c),
\]
and pose the worst-case risk minimization problem:
\begin{align}
    \label{wc-0}
\rho_h^{\rm wc}(\mu,\sigma):=  \min_{x \geq 0} \sup_{F \in \mathcal{S}(\mu, \sigma)} \rho^F_h(\psi(x, p, c)), \quad \text{(with convex } h).
\end{align}
This formulation serves as the basis for our analysis throughout the remainder of the paper.   Given $x\ge 0$, an optimal distribution in the inner problem of \eqref{wc-0} is referred to as a worst-case distribution.  Table \ref{tab1} presents several widely used distortion risk measures. We denote by  $\mathcal H$ the set of all convex distortion functions and by $h'$ the left-continuous function that equals the left derivative of $h$ on $(0,1]$, with $h'(0)$ defined as the right derivative of $h$ at $0$.  A canonical example is Conditional Value-at-Risk (CVaR), defined for confidence level $\beta \in [0,1]$ as:
$$
\mathrm{CVaR}_{\beta}(X) = \frac{1}{1-\beta} \int_{\beta}^{1} \operatorname{VaR}_{\alpha}(X) \, \mathrm{d}\alpha,~~\beta\in [0,1)~~~{\rm and}~~~\mathrm{CVaR}_{1}(X)=\mathrm{VaR}_{1}(X).
$$
\begin{table}[h!]
\small
\begin{center}
\caption{Examples of Risk Measures and Their Distortion Functions}
\label{tab1}
\begin{tabularx}{\textwidth}{l X X}
\hline
{Risk measures} &{Definition} & {Distortion Function} \\
\hline
Mean-CVaR & 
$
\begin{array}{l}\rho_{\lambda,\alpha}(X) = \lambda \mathbb{E}[X] + (1-\lambda)\mathrm{CVaR}_\alpha(X) \\
\lambda\in [0,1],\alpha\in [0,1]
\end{array}
$
& 
$h(u) = \lambda u + \frac{1-\lambda}{1-\alpha}(u-\alpha)_+$ \\
\hline 
  Dev.  Median   
  &
$  
\begin{array}{l}
   \mathrm {DM}_a(X) = \mathbb{E}[X] + a \mathbb{E}[|X - m_X|]   \\
    a\in [0,1]
\end{array}
$
& $h(u) =\left\{\begin{array}{ll}  (1-a)u, & u<1/2,\\ (1+a)u - a, & u\ge 1/2. \end{array} \right.$    \\
\hline
 Wang transform &
$ 
\begin{array}{l}
W_\lambda(X) = \int_{0}^{1} F_X^{-1}(u)\, \mathrm{d}h_\lambda(u) \\
\lambda\in\R
\end{array}
$
& 
$h_\lambda(u) = 1 - \Phi\left[\Phi^{-1}(1-u) + \lambda\right]$ \\ 
\hline
Prop. Hazards & 
$\mathrm{P}_a(X) = \int_{0}^{1} F_X^{-1}(u)\, \mathrm{d}h(u)$ 
& 
$h(u) = 1 - (1-u)^a$, $0<a\le 1$ \\
\hline
Gini Measure & 
$\mathrm{Gini}_a(X) = \mathbb{E}[X] + a \mathbb{E}[|X - X'|]$ 
& 
$h(u) = (1-a)u + a u^2$, $0<a\le 1$ \\
\hline
\end{tabularx}
\end{center}
\vskip0.2cm
{\small {\it Notes:} Dev.  Median is deviation from the median introduced by \cite{D90}; $m_X = F_X^{-1}(1/2)$; Wang transform is introduced by \cite{W00}; Prop. Hazards is proportional hazards transform introduced by \cite{W95}; $\Phi$ is the standard normal cdf;  Gini measure is introduced by \cite{Y82};  $X^{\prime}$ is an independent copy of $X$.
}
\end{table}

\section{Optimal Ordering Rule}

We now present an analytical solution to the worst-case risk minimization problem \eqref{wc-0}, offering insight into how optimal order quantities are shaped by three key elements: the cost structure--captured by the cost-to-price ratio $\beta = c/p$; demand uncertainty--measured by the coefficient of variation $r = \sigma/\mu$; and the newsvendor’s risk preference, encoded by the distortion function $h$.

\begin{theorem}\label{main-th}
Let $h \in \mathcal{H}_2 := \left\{ h \in \mathcal{H} : \int_0^1 (h'(u))^2\, \d u < \infty \right\}$, and define $s^* := h^{-1}(\beta)$, 
\[ 
\Delta_{s^*,t}:=\sqrt{t\int_{s^*}^{t} \left( h'(u)\right)^2\, \d u - (h(t)-\beta)^2},~t\ge s^* ~~\text{and}~~\sigma_t := \sqrt{t(\mu^2 + \sigma^2) - \mu^2},~  t \in \left[ \frac{1}{1 + r^2}, 1 \right].
\]
Then the optimal order quantity for problem \eqref{wc-0} satisfies one of the following:
\begin{itemize}[align=left]

\item [\textbf {(i) High demand uncertainty:}] 
If  $r \geq \sqrt{{1}/{s^*} - 1}$,
then $\rho^{\rm wc}_h(\mu,\sigma) =
0$, the optimal order quantity is $x^*=0$, and  the worst-case distribution is
 \begin{equation}\label{wosrt-dis-1}
 \p\left(S^*=\frac{\mu^2+\sigma^2}{\mu}\right)=\frac{1}{1+r^2}=1-\p(S^*=0).
 \end{equation}

\item [\textbf{(ii) Low demand uncertainty:}] 
If $r \leq \frac{\Delta_{s^*,1}}  { h'(1)-(1-\beta)}$, then   
$\rho_h^{\rm wc}(\mu,\sigma) =
-\mu(p-c) +  p \sigma  \Delta_{s^*,1} $,
the optimal order quantity is
  \begin{align} \label{eq:solutionx-2}
x^* =
\mu  - \sigma \frac{h^\tau(s^*)- 2(1-\beta)}{2\Delta_{s^*,1} },
\end{align}
where $h^\tau(s)$ is any value in $[h'(s),h'(s+)]$,
 and the worst-case distribution is
  \begin{align} \label{eq:wcdistributoin-2}
 F^{-1}(u)= 
\mu - \sigma \frac{h'(1-u)}{\Delta_{s^*,1}} 1_{\{u<1- s^*\}} + \sigma \frac{1-\beta}{\Delta_{s^*,1}}.
\end{align}

\item [\textbf {(iii) Intermediate demand uncertainty:}] 
\begin{sloppypar}If $\frac{\Delta_{s^*,1}}{ h'(1)-(1-\beta) }<r<\sqrt{ {1}/{s^*} - 1}$, 
then   
$\rho_h^{\rm wc}(\mu,\sigma) =
\frac{p}{t^*}\left[-\mu (h(t^*)-\beta) +\sigma_{t^*} \Delta_{s^*,t^*}\right]$,
 the optimal order quantity is\end{sloppypar}
\vspace{-\baselineskip}
  \begin{align*}
x^* =
   \frac{\mu}{t^*}  -   \frac{\sigma_{t^*}}{t^*}\frac{ t^*h^\tau(s^*)- 2(h(t^*)-\beta) }{2\Delta_{s^*,t^*}},
   \end{align*}
 and the worst-case distribution is
\begin{equation} \label{wwdis}
   F^{-1}(u) =  \begin{cases}
0, & u<1-t^*,\\
\frac{\mu}{t^*} - \frac{\sigma_{t^*}} {t^*} \frac{t^*h'(1-u)-(h(t^*)-(1-\beta))}{\Delta_{s^*,t^*}}, &   1-t^*\le u < 1-  s^*  ,\\
\frac{\mu}{t^*}+ \frac{\sigma_{t^*}}{t^*} \frac{h(t^*)-(1-\beta)}{\Delta_{s^*,t^*}}, &  u\ge 1-  s^* ,
\end{cases} 
\end{equation}
where 
\begin{align}
    \label{eq:tsatr}
  t^* 
 & =\max\left\{t\in  \left[\frac{1}{1+r^2},1\right]
 \left| (1+r^2)(th'(t)-h(t)+\beta)^2 - \int_0^1 (h'(u)-h (t)+\beta)^2\d u\le0 \right. \right\}.
\end{align}
 \end{itemize}
\end{theorem}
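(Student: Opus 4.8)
The plan is to peel \eqref{wc-0} from the inside out. Since the loss $\psi(x,p,c)=cx-p\min\{S,x\}$ is a non-increasing function of demand $S$, the distortion functional reduces to a quantile integral: writing $Q=F^{-1}$ for the quantile function of $S$ and using $\rho_h(g(S))=\int_0^1 g(Q(1-u))\,\d h(u)$ for non-increasing $g$ together with $\int_0^1 h'(u)\,\d u=1$,
\[
\rho_h^F(\psi(x,p,c))=cx-p\int_0^1\min\{Q(u),x\}\,h'(1-u)\,\d u .
\]
Thus the inner worst case equals $cx-p\,v_x$ with $v_x:=\inf\{\int_0^1\min\{Q(u),x\}h'(1-u)\,\d u:\ Q\ \text{nondecreasing, nonnegative},\ \int_0^1 Q=\mu,\ \int_0^1 Q^2=\mu^2+\sigma^2\}$, and \eqref{wc-0} collapses to the scalar program $\min_{x\ge0}(cx-p\,v_x)$.

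The core of the argument is the moment problem defining $v_x$. I would dualize the two moment constraints with multipliers $\lambda\in\R$ and $\gamma\ge0$ — the sign of the second-moment multiplier being forced, since $\min\{\cdot,x\}$ is bounded while $+\gamma Q^2$ must dominate for large $Q$ — and minimize the Lagrangian pointwise over $Q(u)\ge0$. Because $u\mapsto h'(1-u)$ is non-increasing and $q\mapsto\min\{q,x\}$ is concave, the pointwise minimizer is automatically non-decreasing, so the monotonicity constraint may be dropped. Analyzing the three branches of the pointwise parabola produces precisely the three-regime worst-case quantile function of the statement: an atom at $0$ on $[0,1-t)$ (where the unconstrained minimizer is negative and gets clipped), an interior branch affine in $h'(1-u)$ on $[1-t,1-s)$, and an atom at the top on $[1-s,1)$ (where $Q>x$, so the objective is flat and only the quadratic penalty acts, concentrating the upper mass at one high value). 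The mean constraint then holds for every scale, whereas the second-moment constraint fixes the scale via $\Delta_{s,t}$; a standard moment-duality argument then certifies optimality of this candidate, and the hypothesis $h\in\mathcal H_2$ (i.e.\ $\int_0^1(h')^2<\infty$) makes $\Delta_{s,t}$ and $\sigma_t$ finite.

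For the outer minimization I would use the envelope theorem: $v$ is concave, and at its points of differentiability $v_x'=\int_{\{Q_x>x\}}h'(1-u)\,\d u=\int_0^{s(x)}h'(w)\,\d w=h(s(x))$, where $s(x)$ is the mass of the top atom of the worst-case distribution for order level $x$. Hence $cx-p\,v_x$ is convex and its optimality condition $c=p\,v_x'$ reads $h(s(x))=\beta$, i.e.\ $s(x)=s^*=h^{-1}(\beta)$ — exactly the normalization that pins the upper cutoff at $1-s^*$. Feeding $s=s^*$ into the second-moment equation leaves a single scalar equation for the lower cutoff $1-t$: if its solution satisfies $t\le1$ (equivalently the interior branch stays nonnegative) we are in case (ii) with $t^*=1$ and no atom at $0$; otherwise we clip the interior branch at $0$ and obtain case (iii), with $t^*<1$ the relevant root of \eqref{eq:tsatr}; and when $r$ is so large that even $x=0$ cannot be improved we get case (i), where the two-point distribution \eqref{wosrt-dis-1} shows directly that $\sup_F\rho_h^F(\psi(x,p,c))\ge0$ for every $x\ge0$ while $x=0$ attains $0$. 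Finally one checks that the three thresholds on $r$ are correctly ordered and exhaustive, that the displayed $F^{-1}$'s are genuine quantile functions with moments $(\mu,\mu^2+\sigma^2)$, that the ``$\max$'' in \eqref{eq:tsatr} is attained and identifies the correct worst-case cutoff, and — since the outer objective is flat over the support gap just below the top atom, so $x^*$ need not be unique — that the closed form written with $h^\tau(s^*)$ is one valid representative.

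The step I expect to be the main obstacle is the rigorous treatment of the inner moment problem: establishing the exact three-regime structure of the worst-case quantile function in the presence of the kink of $\min\{\cdot,x\}$, controlling the atom at $0$ and the atom at the top simultaneously, and producing a dual certificate valid uniformly over all three parameter regimes and their boundary cases. Once that structural result is secured, the outer problem, the identity $h(s^*)=\beta$, and the case bookkeeping are comparatively mechanical, if lengthy.
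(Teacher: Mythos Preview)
Your plan is sound and leads to the same three-regime solution, but your route to the key structural lemma differs from the paper's. You attack the inner moment problem by direct Lagrangian duality on the quantile function: dualize the two moment constraints, minimize pointwise over $Q(u)\ge0$, and read off the ``atom at $0$ / affine-in-$h'(1-u)$ / top atom'' structure from the three branches of the pointwise parabola. The paper instead first reduces to piecewise linear $h$, uses convex-order arguments to show the worst case is discrete with support on the breakpoints of $h'$, solves the resulting \emph{finite-dimensional} convex program via its dual to obtain the same affine-in-$h'$ structure, and then passes to general $h\in\mathcal H_2$ by an $L^2$-approximation argument. Your approach is shorter and more transparent once strong duality and the automatic monotonicity of the pointwise minimizer are in hand, but those are exactly the points that require care (and that you correctly flag as the main obstacle); the paper's detour through discretization buys a fully constructive argument in which duality is only invoked in finite dimensions and no infinite-dimensional Slater-type condition is needed. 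For the outer problem the two approaches are essentially equivalent: your envelope computation $v_x'=h(s(x))$ and first-order condition $h(s(x))=\beta$ is the differential version of the paper's monotonicity argument showing $T(x)=cx-p\,v_x$ is decreasing on $\{x:ph(s_0(x))>c\}$ and increasing on $\{x:ph(s_0(x))<c\}$, both yielding $s_0(x^*)=s^*=h^{-1}(\beta)$.
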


While more intricate than classical rules--such as those of \cite{S58}, \cite{NST11}, and \cite{HDZ14}--this added complexity reflects the model’s broader ability to represent virtually any risk-averse preference within the dual utility framework (\cite{Y87}).

Case {\bf (i)} offers a sharp ordering threshold: the newsvendor should refrain from ordering (i.e., set $x^* = 0$) whenever the coefficient of variation $r$ exceeds the critical level $\sqrt{1/s^* - 1}$, where $s^* = h^{-1}(\beta)$. For added intuition, this threshold condition is equivalent to $h\left( \frac{1}{1 + r^2} \right) \le \beta$, where $\frac{1}{1 + r^2} \in [0,1]$ can be interpreted as the implied probability of observing positive demand. The distortion function $h$ then adjusts this probability to reflect risk aversion. When the risk-adjusted likelihood of a positive sale falls below the cost-to-price ratio $\beta$, placing an order is no longer justified. This outcome arises when uncertainty is high (large $r$), costs are high (large $\beta$), or risk aversion is extreme—i.e., when $h$ approaches the worst-case risk measure ($\mathrm{ess\,sup}$).

Case {\bf (ii)} arises when the coefficient of variation $r$ is relatively small. The optimal order quantity takes on a familiar mean--standard deviation form:
$
x^* = \mu - k\sigma,\text{where}\; k := \frac{h^\tau(s^*) - 2(1 - \beta)}{2\Delta_{s^*,1}}.
$
The sign of $k$ is governed by the slope $h^\tau(s^*)$, which captures the decision maker’s sensitivity to downside risk near the threshold $s^* = h^{-1}(\beta)$. A steeper slope--corresponding to greater aversion to downside risk--yields a positive $k$, reducing the order quantity below the mean. Conversely, a flatter slope may result in a negative $k$, prompting a more aggressive order. 

Case {\bf (iii)} captures the intermediate regime between cases {\bf (i)} and {\bf (ii)}, where the coefficient of variation $r$ lies strictly between the two thresholds. The optimal order quantity now depends on an endogenous probability variable $t^* \in \left[\frac{1}{1 + r^2}, 1\right]$, representing the worst-case likelihood of positive demand (with $1 - t^*$ reflecting the probability of zero demand; see \eqref{wwdis}). The value of $t^*$ in \eqref{eq:tsatr} can be easily calculated via bisection,  leveraging the monotonicity of the left-hand side of the inequality.  When $h$ is piecewise linear, the optimal $t^*$ coincides with one of the slope breakpoints of the distortion function—yielding a closed-form solution in its simplest form. Notably, case {\bf (ii)} can be recovered as a limiting case when $t^* = 1$, corresponding to strictly positive demand.

\begin{corollary}\label{co:1}
If $h$ is piecewise linear, that is, there exist numbers
 $0 = a_0 < a_1 < \cdots < a_n = 1,$ $   n\ge 2,$ 
such that
 $(0,1] = \bigcup_{j=1}^n (a_{j-1},a_j],$ 
and $h$ is linear on each interval $(a_{j-1},a_j]$, 
then the optimal $t^*$ must be one of the $a_j$'s.
\end{corollary}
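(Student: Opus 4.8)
The plan is to extract $t^*$ directly from its defining relation \eqref{eq:tsatr} and to exploit that, for a piecewise-linear $h$, the expression compared with zero there is a step function perturbed by a smooth, strictly monotone term. Write $\Psi(t)$ for the left-hand side of the inequality in \eqref{eq:tsatr}, so that $t^*=\max\{t\in[1/(1+r^2),1]:\Psi(t)\le 0\}$. The first ingredient is the monotonicity already invoked in the proof of Theorem~\ref{main-th}: differentiating and using $\frac{\d}{\d t}\big(th'(t)-h(t)\big)=t\,h''(t)$ together with $\int_0^1 h'(u)\,\d u=h(1)-h(0)=1$, one obtains
\[
\Psi'(t)=2(1+r^2)\,t\,h''(t)\big(th'(t)-h(t)+\beta\big)+2\,h'(t)\big(1-h(t)+\beta\big),
\]
and each summand is nonnegative: $h$ is convex so $h''\ge 0$ and $th'(t)-h(t)\ge h(0)=0$ by the subgradient inequality, while $h$ is nondecreasing with $h\le 1$ and $\beta\in(0,1)$. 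Hence $\Psi$ is nondecreasing on $[1/(1+r^2),1]$, and $t^*$ is precisely the point at which $\Psi$ switches from $\le 0$ to $>0$.

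Second, I record the shape of $\Psi$ when $h$ is piecewise linear. On each open piece $(a_{j-1},a_j)$ we have $h''\equiv 0$, so $q(t):=th'(t)-h(t)$ is constant on each half-open piece $(a_{j-1},a_j]$, while at every breakpoint it jumps upward, $q(a_j+)-q(a_j)=a_j\big(h'(a_j+)-h'(a_j)\big)>0$, because convexity makes consecutive slopes strictly increasing. The remaining term $M(t):=\int_0^1\big(h'(u)-h(t)+\beta\big)^2\,\d u$ depends on $t$ only through $h(t)$, and as a function of $z=h(t)$ it equals the upward parabola $z^2-2(1+\beta)z+\mathrm{const}$, whose vertex is at $z=1+\beta$; it is therefore strictly decreasing on the relevant range $h(t)\le 1<1+\beta$. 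Consequently $\Psi(t)=(1+r^2)\big(q(t)+\beta\big)^2-M(t)$ is continuous and nondecreasing on each $(a_{j-1},a_j]$ (and constant there if the piece is flat), with a strictly positive jump at every breakpoint $a_j$.

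The remaining and hardest step is to show that, under the hypotheses of case~(iii), the switch of $\Psi$ occurs at a jump point and not in the interior of a positive-slope piece; combined with the previous paragraph this forces $t^*=a_k$, where $a_k$ is the breakpoint at which $\Psi(a_k)\le 0<\Psi(a_k+)$. I would establish this by writing the ``bad'' scenario $\Psi(a_{k-1}+)<0<\Psi(a_k)$ as the pair of inequalities $M\big(h(a_k)\big)<(1+r^2)\big(q_k+\beta\big)^2<M\big(h(a_{k-1})\big)$, with $q_k$ the constant value of $q$ on the $k$-th piece, and then contradicting it using the two defining inequalities of case~(iii), namely $r>\Delta_{s^*,1}/\big(h'(1)-(1-\beta)\big)$ and $r<\sqrt{1/s^*-1}$, together with the monotone chains $q_1<\dots<q_n$, $h(a_0)<\dots<h(a_n)$ and the identity $\int_0^1 h'=1$ to bound how far $M$ can move across one linear piece relative to the size of the adjacent jump of $(1+r^2)(q+\beta)^2$. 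A more structural alternative is to observe that on a linear piece of $h$ the worst-case law \eqref{wwdis} is finitely supported with atom weights governed by the breakpoints of $h$, so that the optimality (extreme-point) conditions of the inner problem pin $t^*$ to a breakpoint. Finally, the degenerate endpoints are immediate: $t^*=1=a_n$ is exactly case~(ii), and the collapse $t^*=1/(1+r^2)$ is case~(i), so within case~(iii) proper $t^*$ is an interior breakpoint $a_k$, $1\le k\le n-1$ — the content of the corollary and of the remark preceding it.
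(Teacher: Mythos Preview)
Your proposal has a genuine gap at what you yourself flag as the ``hardest step.'' You correctly observe that $q(t)=th'(t)-h(t)$ is piecewise constant while $M(t)=\int_0^1(h'(u)-h(t)+\beta)^2\,\d u$ is continuous and strictly decreasing in $h(t)$; but precisely because of this, $\Psi(t)=(1+r^2)(q(t)+\beta)^2-M(t)$ is \emph{strictly increasing} on every positive-slope piece, and nothing you have written rules out $\Psi$ changing sign in the interior of such a piece. Neither of your sketched remedies is actually carried out: the first would require a quantitative comparison between the drift of $M$ across one piece and the jump of $(1+r^2)(q+\beta)^2$ at its endpoints, and the case-(iii) inequalities do not obviously deliver that; the second (``the worst-case law has atom weights governed by the breakpoints'') is the right instinct, but it amounts to re-deriving the corollary from the discrete optimality structure of Lemma~\ref{lm-worstdf}, not from $\Psi$.

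The paper's proof avoids this obstacle by a reformulation you do not attempt. It passes to the shifted distortion $h_{s^*}(u)=(h(u)-\beta)_+/(1-\beta)$ and rewrites the defining inequality for $t^*$ as $w(t)\le 0$, where the dependence of $w$ on $t$ enters only through the step function $h_{s^*}'(t)$ rather than through the piecewise-linear $h(t)$. Since $h_{s^*}'(t)=h_{s^*}'(u)$ whenever $u$ and $t$ lie in the same linear segment (and, crucially, $s^*\le 1/(1+r^2)\le t$ in case~(iii)), the contributions to $w$ coming from the segment $[a_{j-1},t]$ vanish identically, and $w$ is therefore \emph{constant} on each $(a_{j-1},a_j]$. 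The largest $t$ with $w(t)\le 0$ is then forced to be a breakpoint. What your decomposition into $(1+r^2)(q+\beta)^2$ and $M(t)$ misses is that an equivalent form of the constraint eliminates the troublesome dependence on $h(t)$ altogether; once that reformulation is made there is no ``hardest step'' left.
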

\begin{proof}[Proof.] 
By standard manipulation, one can verify that the inequality of \eqref{eq:tsatr} 
is equivalent to
 $$
w(t):=   (1+r^2) \left(\int_0^t(h_{s^*}'(t)-h_{s^*}'(u)\d u \right)^2- \int_0^1 (h_{s^*}'(u)-h_{s^*}(t))^2\d u\le0.
 $$
where $h_{s^*}(\cdot)=(h(\cdot)-h(s^*))_+/(1-\beta)$. Note that  $s^*\le 1/(1+r^2)$ and thus  $h_{s^*}'(t)=h_{s^*}'(u)$ for $\max\{ a_{j-1},1/(1+r^2))\}<u\le t\le a_{j}$ for any $j$. It follows that $w(t)$ takes constant value when $t\in(a_{j-1},a_j]$, and thus, $t^*$ must be one of the $a_j$'s.  

\end{proof}
\vspace{-\baselineskip}
\vspace{0.1in}
\paragraph{Worst-case distributions} 
The preceding discussion has focused on optimal order quantities across the three demand regimes. We now turn to the structure of the worst-case distributions that underpin these decisions in the inner risk evaluation problem. Unlike earlier models—such as Scarf’s risk-neutral formulation (\cite{S58}) and its risk-averse extensions under CVaR (\cite{NST11}) and mean–standard deviation (\cite{HDZ14})—whose worst-case distributions collapse to degenerate two-point forms, our distortion-based formulation yields significantly richer structures. Depending on the shape of the distortion derivative $h'$, the worst-case distribution can be discrete, continuous, or a mixture of both. 

At a high level, the worst-case distribution in \eqref{wwdis} consists of a point mass on high-demand realizations (exceeding the order quantity), along with a complementary component—discrete or continuous—spanning low-demand outcomes. This component distributes probability
across a spectrum of adverse scenarios, with the weighting governed by the perceived severity of risk encoded by $h$. When $h'$ is continuous, the worst-case distribution may itself be continuous.
This structural flexibility offers a more realistic and behaviorally interpretable view of how risk-averse decision makers hedge against uncertainty—moving beyond the stylized and overly conservative degeneracy imposed by earlier models.




\section{Impact of Risk Aversion on Optimal Ordering Behavior}

Building on the ordering rules from the previous section, we uncover a more nuanced and structured relationship between risk aversion and the optimal order quantity.
As a baseline, we begin by revisiting the CVaR model from \cite{NST11}, which arises as a special case of Theorem~\ref{main-th}.

\begin{example} [CVaR, \citealp{NST11}] For the distortion function $h(x)=(x-\alpha)_+/(1-\alpha)$ of {\rm CVaR}$_\alpha$, denote by $\eta= (1-\alpha)\left(1- c/p\right)$, and the optimal value and solution of  problem \eqref{wc-0} are respectively
 \begin{align*}
\rho_h^{\rm wc}(\mu,\sigma) = \begin{cases}
0,&\text{if}~\eta  \le \frac{\sigma^2}{\mu^2+\sigma^2} \\
(p-c)\left(-\mu+\sigma \sqrt{\frac{1-\eta}{\eta}}\right), & \text{if}~\eta  >\frac{\sigma^2}{\mu^2+\sigma^2}
\end{cases}
~~ {\rm and}~~
x^* = \begin{cases}
0, &\text{if}~\eta  \le \frac{\sigma^2}{\mu^2+\sigma^2} \\
\mu + \sigma \frac{ 2\eta-1}{2\sqrt{\eta(1-\eta)} }, & \text{if}~\eta  >\frac{\sigma^2}{\mu^2+\sigma^2}.
\end{cases}
\end{align*}
\end{example}

The solution always recommends ordering less as the risk aversion parameter $\alpha$ increases. While this behavior appears intuitive, it reflects the structural simplicity of the CVaR model and can be misleading. As we show below, the pattern already breaks down under a more realistic formulation—mean–CVaR. To pinpoint where this monotonicity holds or fails, we derive from Theorem~\ref{main-th} an explicit expression for the optimal order quantity $x^*$ across various parameter regimes.

\begin{example} 
[Mean-CVaR] The distortion function is
$h(u) = \lambda u + \frac{1-\lambda}{1-\alpha}(u-\alpha)_+$, with $\alpha \in [0,1]$ and $\lambda \in [0,1]$. We focus on cases {\bf (ii)} and {\bf (iii)}, where the optimal order is strictly positive.

\begin{enumerate} 
\item [(i)] If $\beta\ge \lambda\alpha$, then  
$s^*=h^{-1}(\beta) = \frac{\alpha(1-\lambda) +\beta(1-\alpha)}{1-\lambda\alpha}>\alpha$ and thus $t^*=1$. The optimal solution is
$$x^* = \mu  -  \sigma\frac{  \frac{1-\lambda\alpha}{(1-\alpha)(1-\beta)} - 2  }{2\sqrt{   \frac{1-\lambda\alpha}{(1-\alpha)(1-\beta)}    - 1}}.$$
 One can verify that   $x^*$ is decreasing in $\alpha$   and increasing in $\lambda$.
\item [(ii)]  If  $\beta < \lambda\alpha$ and   
$ 
\left(\beta-\frac{\lambda\alpha}{1-\alpha}\right)^{2} \leqslant \frac{\mu^{2}}{\sigma^{2}}\left(1- \beta {\lambda}+\frac{ (1-\lambda)^2\alpha}{1-\alpha}   - (1-\beta)^2\right),
$
then $s^*=h^{-1}(\beta)=\beta/\lambda < \alpha$ and $t^*=1$. 
The optimal solution is
  $$
x^* =
  \mu -  \sigma\frac{ \lambda- 2(1-\beta) }{2\sqrt{1- \beta {\lambda}+\frac{ (1-\lambda)^2\alpha}{1-\alpha}   - (1-\beta)^2}}.
$$
  One can verify that $x^*$ is increasing in $\alpha$ if $\lambda>2(1-\beta)$ and decreasing in $\alpha$ if $\lambda\le2(1-\beta)$. However, $x^*$ is not monotonic in $\lambda$ in general.
\item  [(iii)] If $\beta < \lambda\alpha$ and $ 
\left(\beta-\frac{\lambda\alpha}{1-\alpha}\right)^{2} > \frac{\mu^{2}}{\sigma^{2}}\left(1- \beta {\lambda}+\frac{ (1-\lambda)^2\alpha}{1-\alpha}   - (1-\beta)^2\right)
$, then $s^*=h^{-1}(\beta)<\alpha$ and $t^*=\alpha$. The optimal solution is
$$
x^* = \frac{\mu}{\alpha} + \frac{\sigma_{\alpha}}{2\alpha} \left[  \sqrt{\frac{   \alpha\lambda - \beta}\beta  } - 
   \sqrt{\frac{ \beta}{  \alpha\lambda - \beta }} \right].
$$
In this case, $x^*$ is increasing in $\lambda$, while its dependence on $\alpha$ is non-monotonic.
\end{enumerate}
\end{example}

The key takeaway is that the solution aligns with the CVaR model’s recommendation—ordering less as risk aversion increases (higher $\alpha$, lower $\lambda$)—only when the cost-to-price ratio $\beta$ is sufficiently high (i.e., $\beta \ge \lambda\alpha$). However, once this condition fails, the optimal order may increase with risk aversion, underscoring that monotonicity is not universal and that more risk-averse decision makers may rationally order more under distributional ambiguity when ordering is relatively inexpensive.

While the CVaR-based newsvendor model (\citealp{NST11}) may be overly simplistic, the mean–standard deviation model of \cite{HDZ14} raises a deeper concern—paradoxically recommending higher order quantities for more risk-averse decision makers when ordering is expensive (i.e., when $\beta$ is high). As a counterpoint, we examine the mean–deviation-from-median model, a coherent alternative within the mean–deviation family. The derivation is compressed and presented inline due to space constraints, but it follows the same steps as earlier examples.



\begin{example}[Deviation from the median] The distortion function is  
$$h'(u)=\left\{\begin{array}{ll} 1-a , & u<\frac{1}{2} ,\\   1+a , & u \geqslant \frac{1}{2},\end{array}\right.
 ~~{\rm and}~~s^*=\begin{cases}
 \frac \beta{  1-a }, & \beta \le \frac12(1-a),\\
 \frac {a+\beta}{1+a}, & \beta > \frac12(1-a).
 \end{cases}
$$
We focus on cases {\bf (ii)} and {\bf (iii)}, where the optimal order is strictly positive and the condition $h(\mu^2/(\mu^2+\sigma^2))> \beta$ holds--that is, $(1-a)\frac{\mu^2}{\mu^2+\sigma^2} > \beta$ if $\mu\le \sigma$ or
$(1+a)\frac{\mu^2}{\mu^2+\sigma^2} -a>\beta$ if $\mu> \sigma$. 
\begin{enumerate}
\item [(i)] If  $\beta \geq (1-a)/2$, 
  then $s^*=\frac {a+\beta}{1+a}\ge 1/2$ and thus $t^*=1$.  The optimal solution is
$$
x^*=\mu - \sigma \frac{ (1+a)/(1-\beta)-2}{2\sqrt{ (1+a) / (1-\beta) -1} },
$$
which is decreasing in $a$.
\item [(ii)]  If   $\beta < (1-a)/2$ and 
    $(a+\beta)\sigma/\mu \le \sqrt{a^2 + \beta a +\beta(1-\beta)}$, 
     then $s^*=\frac \beta{(1-a)}<1/2$ 
 and thus $t^*=1$. The optimal solution is
$$
x^* = \mu - \sigma \frac{-a-1+2 \beta }{2\sqrt{a^2+a\beta+\beta(1-\beta)} }
$$
which is increasing in $a$ for $\beta>1/3$ and not monotone in $a$ for $\beta<1/3$ in general.  
\item  [(iii)] If   $\beta < (1-a)/2$ and  $(a+\beta)\sigma/\mu > \sqrt{a^2 + \beta a +\beta(1-\beta)}$, then we have $t^*=1/2$. The optimal solution is
$$x^* = 2{\mu}+{\sqrt{\frac{\sigma^2-\mu^2}2}} \times \frac{c_0-2} {\sqrt{c_0-1}}$$ with $c_0:=\frac{1-a}{2\beta}$
which is decreasing in $a$.
\end{enumerate}
\end{example}

In sharp contrast to the mean–standard deviation model of \cite{HDZ14}, the above solution recommends ordering less as risk aversion ($a$) increases when ordering is expensive (i.e., $\beta \geq (1-a)/2$), thereby resolving the paradox and aligning with practical intuition. When ordering is inexpensive, however—similar to the mean–CVaR case—more risk-averse newsvendors may rationally order more. To illustrate this consistent behavioral pattern, Figure~\ref{fig:distortion_grid} shows how the optimal order quantity responds to increasing risk aversion across varying values of $\beta$ and the coefficient of variation $r$, spanning the major distortion families introduced earlier. A consistent pattern emerges under high $\beta$: the optimal order decreases with risk aversion. In contrast, under low $\beta$, the response can be non-monotonic.

\newcommand{\rowlabel}[2][0pt]{%
  \raisebox{#1}{\rotatebox{90}{\scriptsize #2}}}

\newcommand{\ylabel}[1][0pt]{%
  \raisebox{#1}{\rotatebox{90}{\textbf{Optimal order quantity}}}}

\newcommand{\xlabel}[1][0pt]{%
  \hspace*{#1}\textbf{Risk aversion}\hspace*{0pt}}

\begin{figure}[htbp]
  \centering
  \setlength{\tabcolsep}{3pt}

  \begin{tabular}{@{}c@{\hspace{6pt}}c@{}}
    \ylabel[-60pt] &                                   
    \begin{tabular}{@{}c c c c@{}}
        & \textbf{$\beta=0.25$} & \textbf{$\beta=0.556$} & \textbf{$\beta=0.7$} \\[.25em]

        \rowlabel[2.5em]{Mean-CVaR} &
        \includegraphics[width=.30\textwidth]{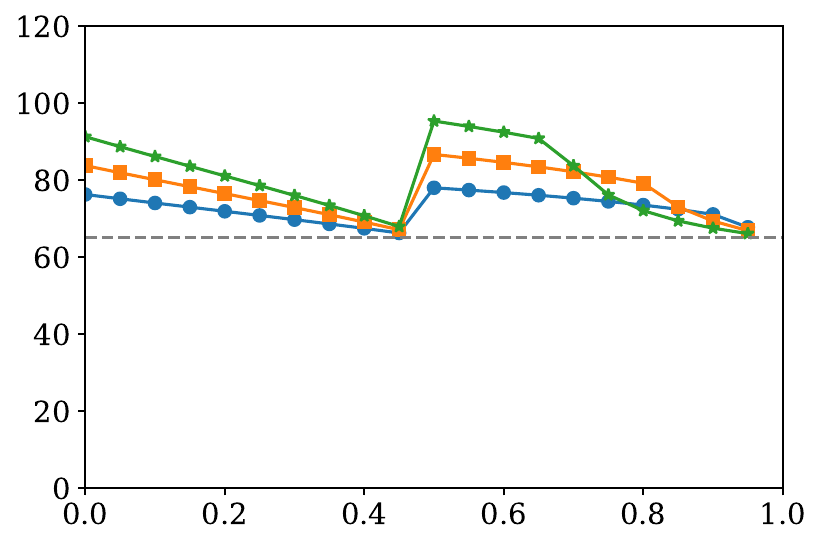} &
        \includegraphics[width=.30\textwidth]{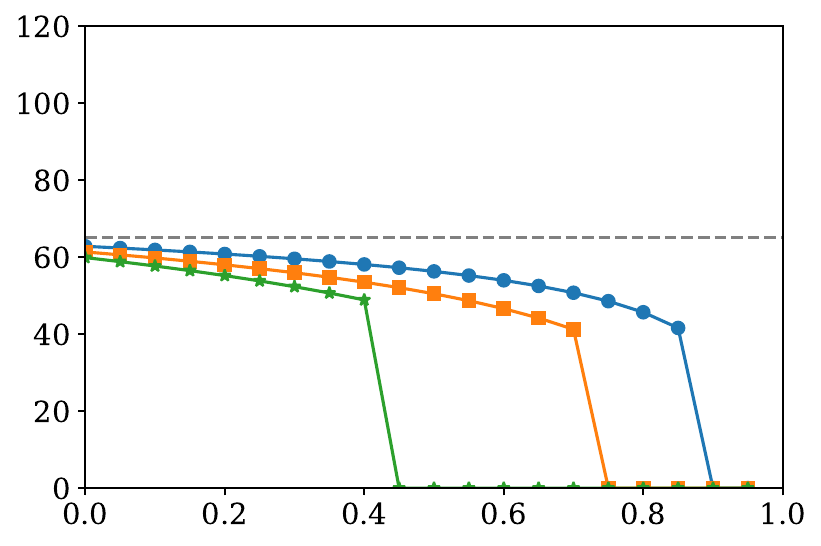} &
        \includegraphics[width=.30\textwidth]{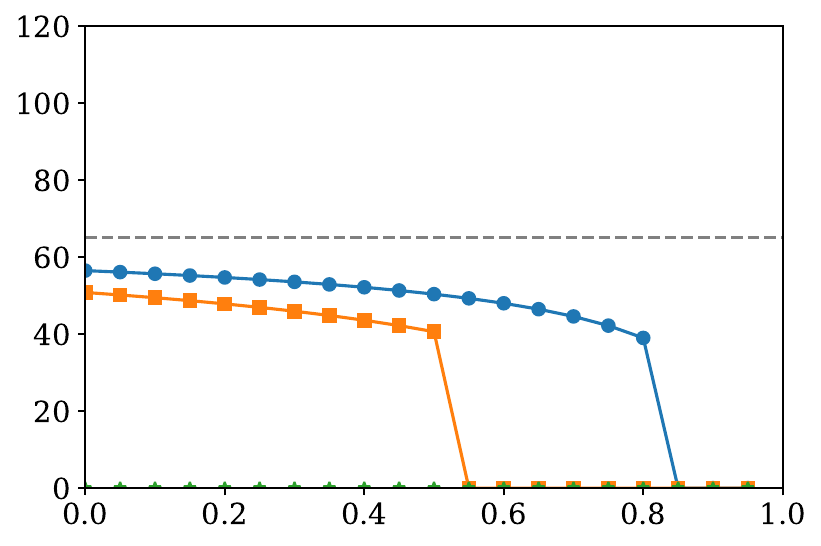} \\[.3em]

        \rowlabel[0.95em]{Deviation from Median} &
        \includegraphics[width=.30\textwidth]{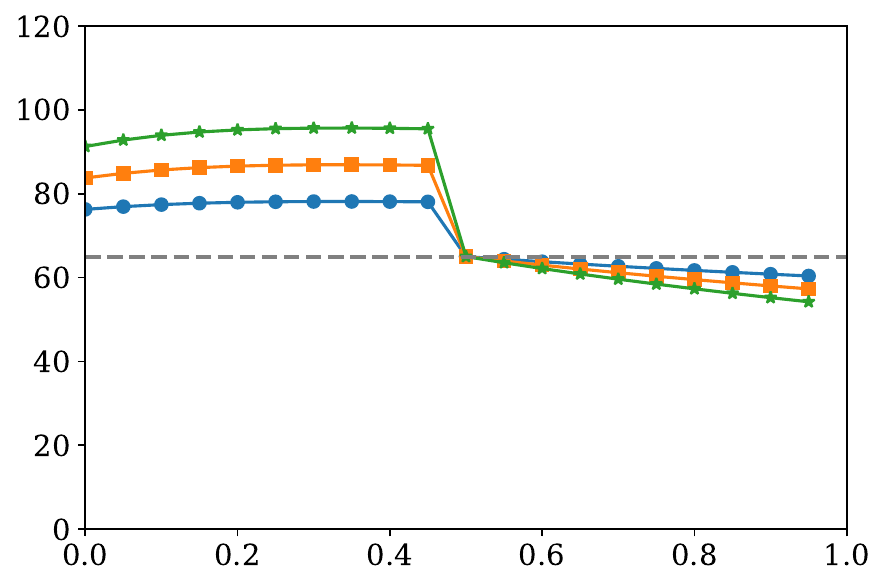} &
        \includegraphics[width=.30\textwidth]{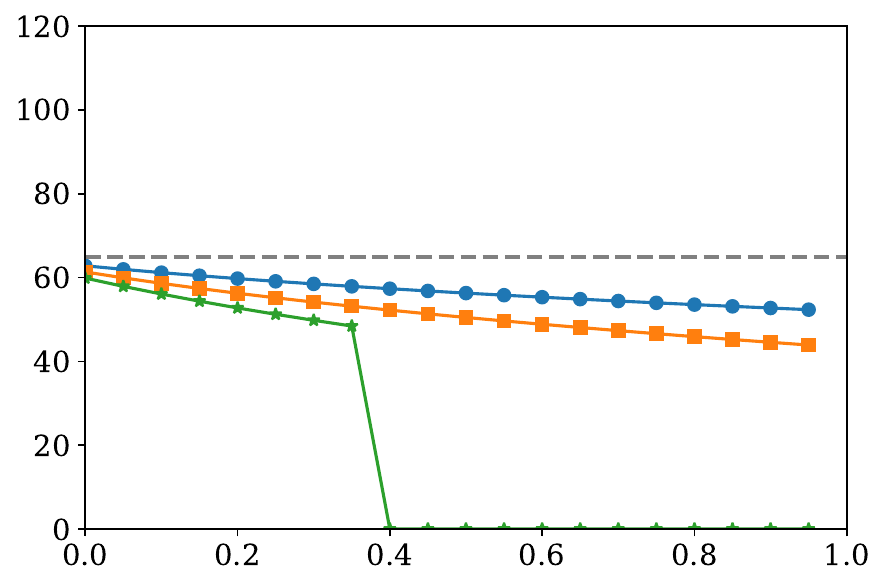} &
        \includegraphics[width=.30\textwidth]{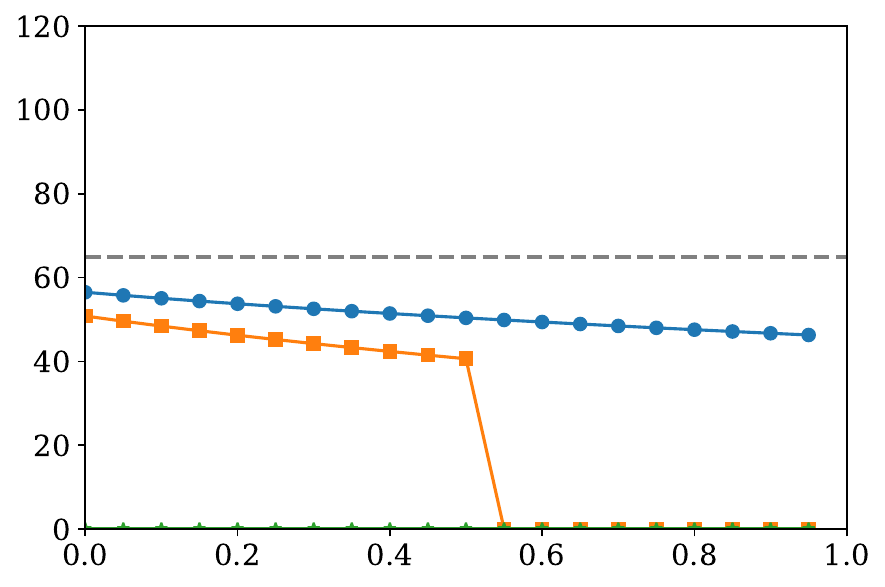} \\[.3em]

        \rowlabel[1.9em]{Wang Transform} &
        \includegraphics[width=.30\textwidth]{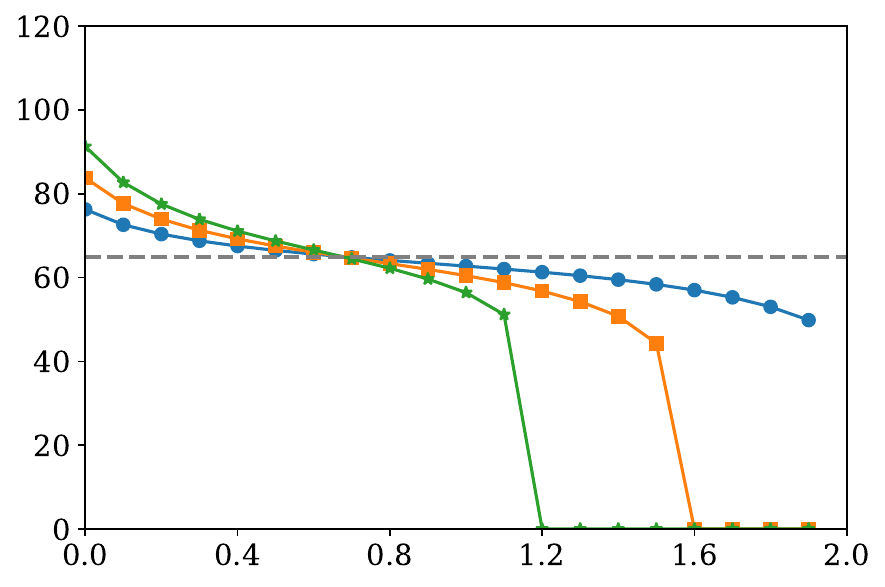} &
        \includegraphics[width=.30\textwidth]{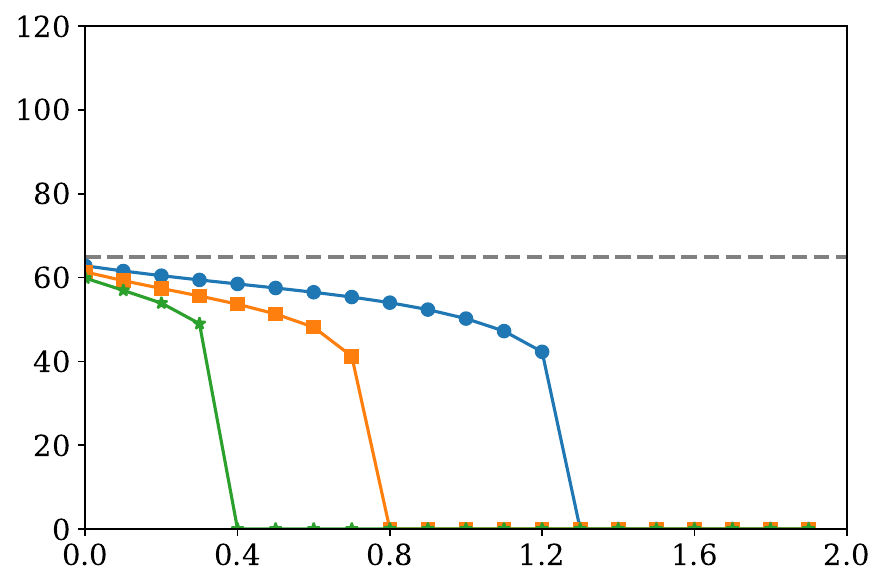} &
        \includegraphics[width=.30\textwidth]{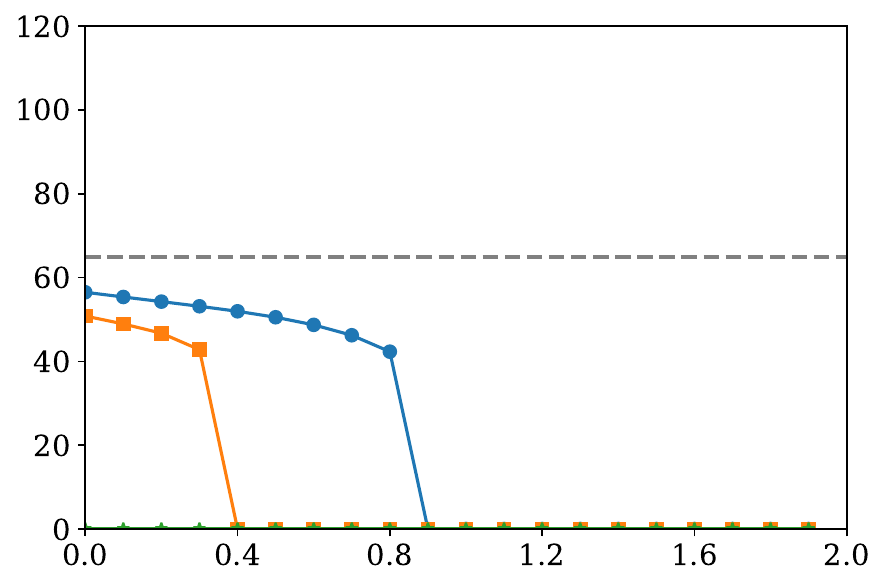} \\[.3em]
        
        \rowlabel[1.4em]{Proportional Hazards} &
        \includegraphics[width=.30\textwidth]{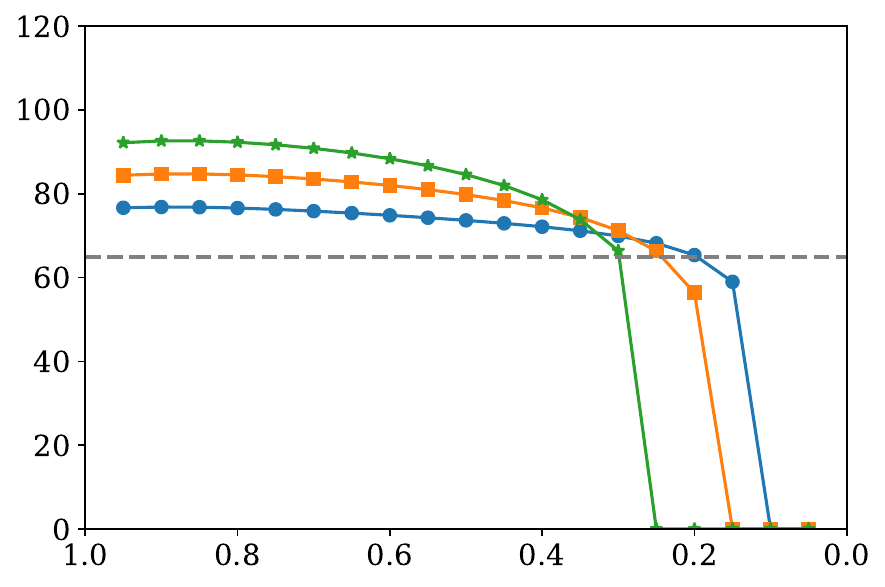} &
        \includegraphics[width=.30\textwidth]{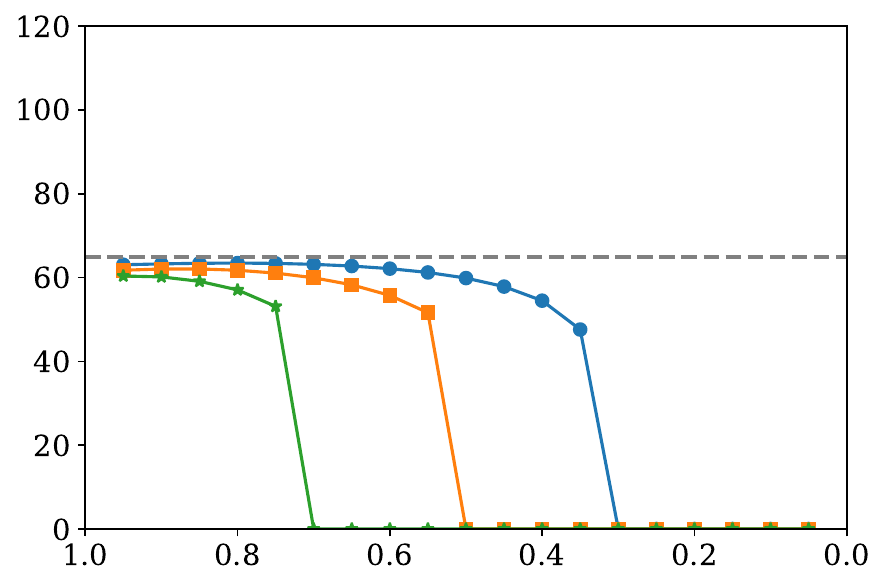} &
        \includegraphics[width=.30\textwidth]{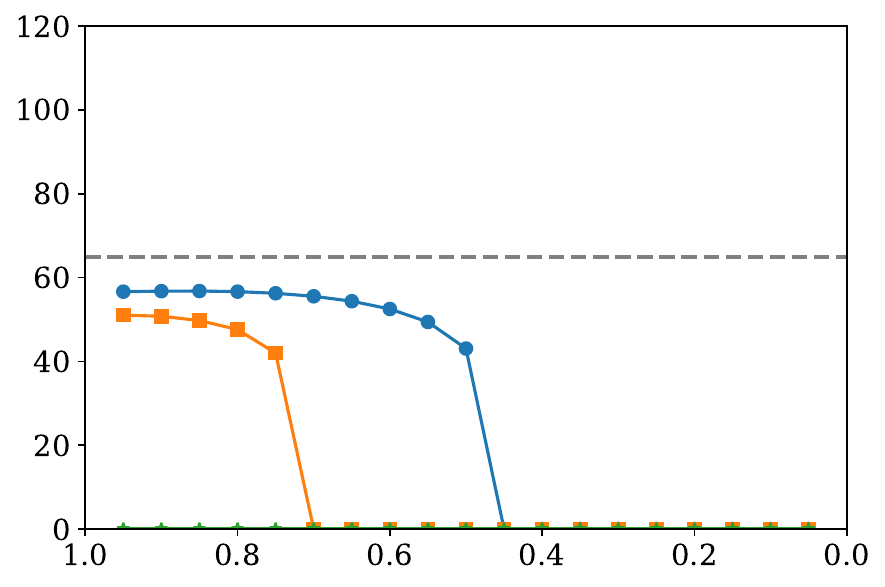} \\[.3em]

        \rowlabel[2.3em]{Gini Measure} &
        \includegraphics[width=.30\textwidth]{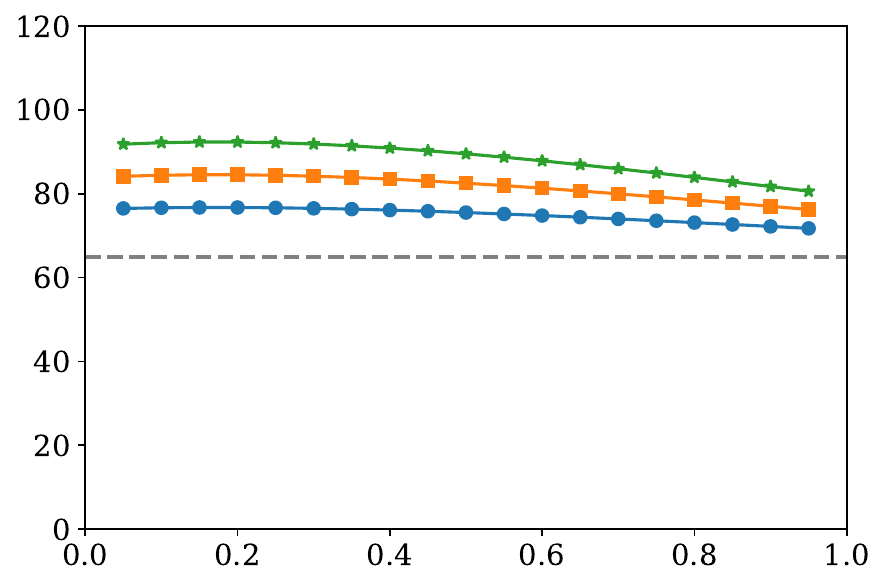} &
        \includegraphics[width=.30\textwidth]{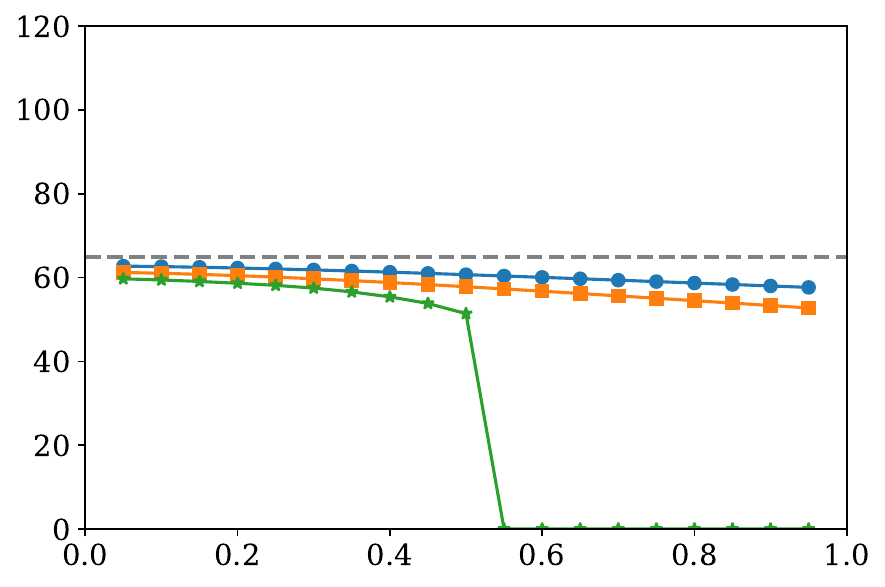} &
        \includegraphics[width=.30\textwidth]{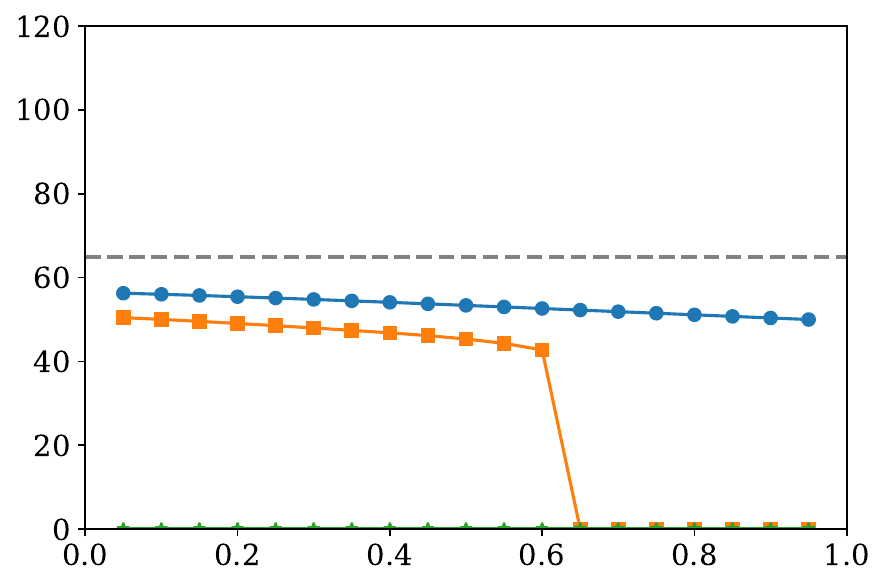} \\

    \end{tabular}
  \end{tabular}


  \includegraphics[height=0.5cm]{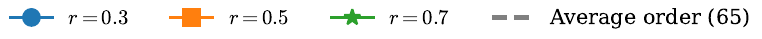}   


  \xlabel[4em]     

  \caption{ Optimal order quantity (vertical axis) versus risk-aversion parameter (horizontal axis)
    under the five risk measures from Section 2—%
    \textbf{Mean-CVaR} (with $\lambda = 0.5$ and $\alpha \in [0,1]$), \textbf{Deviation from Median},
    \textbf{Wang Transfer}, \textbf{Proportional Hazard} and \textbf{Gini Measure}.
    Columns correspond to the cost-to-price ratios $\beta\in\{0.25,\,0.556,\,0.7\}$,
    while marker shape and colour encode the coefficient of variation
    $r\in\{0.3,\,0.5,\,0.7\}$ (see legend).
    The grey dashed line marks the average–order benchmark of 65.
    \textit{Note: for the Proportional-Hazard the horizontal axis is
  $1\!-\!a$ and is plotted in reverse (right-to-left) so that increasing
  risk aversion aligns directionally with the other panels.}}
  \label{fig:distortion_grid}
\end{figure}


\section{Proofs of Theorem \ref{main-th}}
We first consider the inner problem of \eqref{wc-0}: 
\begin{align}
    \label{ws}
   \sup_{F \in \mathcal{S}(\mu, \sigma)} \rho^F_h(\psi(x, p, c)) =   p \sup_{F \in \mathcal{S}(\mu, \sigma)} \rho^F_h(\max\{-S, -x\}) +c x.
\end{align}
For $s\le t$, $k\ge 0$ and $\ell\in\R$, define a distribution   $G_{s,t}$ such that its quantile function is
\begin{equation}\label{eq-20122-2}
 G_{s,t}^{-1}(u) =
  \ell1_{\{u\le s\}} + (k h'(u)+\ell) 1_{\{s<u\le t\}}  ~~{\rm with}~~k h'(t)+\ell\le 0.
\end{equation}
\begin{lemma} \label{lm-worstdf}
For $x\ge 0$ and $h\in \mathcal H_2$, the  worst case distribution of $-S$ of  the problem \eqref{ws}   has the form of $G_{s,t}$ whose inverse function is given by \eqref{eq-20122-2} with $k\ge 0$, $\ell\in\R$, $s\le t$  such that $\ell\le -x$ if $s>0$ and $k h'(u)+\ell\ge -x$ for $u>s$ if $s<t$. 
\end{lemma}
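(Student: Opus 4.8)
The plan is to reduce the inner problem \eqref{ws} to an optimization over quantile functions and then solve it by Lagrangian relaxation of the two moment constraints. Since $h\in\mathcal H_2$, for any $Z$ with $F_Z^{-1}\in L^2(0,1)$ one has $\rho_h^F(Z)=\int_0^1 F_Z^{-1}(u)\,\d h(u)=\int_0^1 F_Z^{-1}(u)h'(u)\,\d u$ (finite by Cauchy--Schwarz, the square-integrability of $F_{-S}^{-1}$ coming from the second-moment constraint), and because $\max\{\,\cdot\,,-x\}$ is non-decreasing and continuous the quantile function of $\max\{-S,-x\}$ is $u\mapsto\max\{F_{-S}^{-1}(u),-x\}$. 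Writing $q:=F_{-S}^{-1}$ and invoking the distribution--quantile correspondence, the requirements $\E_F[S]=\mu$, $\E_F[S^2]=\mu^2+\sigma^2$, $S\ge0$ a.s.\ translate into: $q$ non-decreasing on $(0,1)$, $q\le 0$, $\int_0^1 q=-\mu$, $\int_0^1 q^2=\mu^2+\sigma^2$; call this set $\mathcal Q$. Thus the inner supremum in \eqref{ws} amounts (up to the additive $cx$ and positive factor $p$) to maximizing $\Phi(q):=\int_0^1\big(q(u)\vee(-x)\big)h'(u)\,\d u$ over $\mathcal Q$, and it suffices to exhibit a maximizer of the form \eqref{eq-20122-2}.

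Next I would dualize the two moment constraints with multipliers $\lambda_1\in\R$, $\lambda_2>0$: for feasible $q$, $\Phi(q)$ equals the Lagrangian $\int_0^1\big[(q\vee(-x))h'(u)-\lambda_1 q-\lambda_2 q^2\big]\,\d u+\text{const}$, which is bounded above by its pointwise supremum over $q\le 0$. The pointwise objective is concave in $q$ separately on $\{q\le -x\}$ and on $\{-x\le q\le 0\}$, with an upward jump of its derivative (of size $h'(u)$) at $q=-x$; hence its maximizer is one of $\ell:=-\lambda_1/(2\lambda_2)$ (relevant only when $\ell\le -x$), $\ell+k\,h'(u)$ with $k:=1/(2\lambda_2)>0$, or the boundary value $0$. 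A short comparison of these candidates, using that $h'$ is non-decreasing, shows that as $u$ increases the maximizer passes through them in this order: it equals the constant $\ell$ on an initial interval $(0,s]$, equals $\ell+k\,h'(u)$ on $(s,t]$, and equals $0$ on $(t,1]$. This $q^*$ is automatically non-decreasing (so discarding monotonicity was harmless), satisfies $k\,h'(t)+\ell\le0$, and has $\ell\le-x$ whenever $s>0$ and $\ell+k\,h'(u)\ge-x$ for $u\in(s,t)$ whenever $s<t$ — precisely the form $G_{s,t}^{-1}$ of \eqref{eq-20122-2} with the stated conditions.

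To close the argument I would choose $\lambda_1,\lambda_2$ so that $q^*=q^*_{\lambda_1,\lambda_2}$ meets both moment constraints, $\int_0^1 q^*=-\mu$ and $\int_0^1(q^*)^2=\mu^2+\sigma^2$; then $q^*\in\mathcal Q$, and since the Lagrangian coincides with $\Phi$ on $\mathcal Q$ while dominating $\sup_{\mathcal Q}\Phi$ from above, $q^*$ is a maximizer. Existence of such multipliers follows from a continuity and monotonicity analysis of the map $(\lambda_1,\lambda_2)\mapsto\big(\int_0^1 q^*,\int_0^1(q^*)^2\big)$ — showing its range contains the point $(-\mu,\mu^2+\sigma^2)$ — or, alternatively, from a general strong-duality theorem for moment problems; the genuinely degenerate cases ($x=0$, or $(\mu,\sigma)$ at the boundary of feasibility where $\mathcal S(\mu,\sigma)$ collapses to a point mass) are checked directly and already have the stated form.

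The step I expect to be the main obstacle is this last one: proving that Lagrange multipliers can be selected so that $q^*$ simultaneously satisfies both moment equalities (equivalently, that strong duality holds with dual attainment for \eqref{ws}). Everything else — the quantile reduction, the pointwise solution of the relaxed problem, and the automatic monotonicity of the candidate thanks to $h'$ being non-decreasing — is routine once that is in place; some additional bookkeeping of the endpoints $s$ and $t$ is needed on intervals where $h'$ is flat, but this does not affect the form of the worst-case quantile function.
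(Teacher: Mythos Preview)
Your Lagrangian approach is sound and reaches the right form, but it is a genuinely different route from the paper's. The paper does \emph{not} dualize the infinite-dimensional quantile problem directly. Instead it (i) first takes $h$ piecewise linear and uses a convex-order argument to show that any $F\in\mathcal S(\mu,\sigma)$ can be improved (for the objective) to a discrete distribution whose atoms sit on the breakpoints of $h$; this turns \eqref{ws} into a finite-dimensional convex program, whose KKT conditions give the $(k\lambda_t+\ell)$ structure; and (ii) then handles general $h\in\mathcal H_2$ by approximating $h$ from below by piecewise linear $h_n$ and passing to the limit in $L^2$.

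What your route buys is directness and uniformity: one argument for every $h\in\mathcal H_2$, with the shape of $G_{s,t}^{-1}$ falling out of the pointwise Lagrangian. What the paper's route buys is that the step you correctly flag as the obstacle --- producing multipliers $(\lambda_1,\lambda_2)$ for which the pointwise maximizer meets both moment constraints --- is replaced by finite-dimensional KKT (where Slater is immediate) plus an $L^2$ approximation. Two cautions on your side: first, the fallback you mention, ``a general strong-duality theorem for moment problems'', does not apply off the shelf because $q\mapsto\int(q\vee(-x))h'$ is \emph{convex} in $q$, so this is not a linear moment problem; you really do need the range/continuity argument for $(\lambda_1,\lambda_2)\mapsto(\int q^*,\int (q^*)^2)$. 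Second, on intervals where $h'$ is flat the pointwise maximizer is not unique, and that degree of freedom may be essential to hit both moment equalities exactly --- the paper's discrete reduction sidesteps this by building the breakpoints of $h$ into the ansatz from the start.
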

 \begin{proof}[Proof.]
We first consider the case that $h$ is piecewise linear, that is,   there exist $0=a_0<a_1<\cdots<a_n=1$, $n\ge 2$, such that $(0,1]=\cup_{j=1}^n (a_{j-1},a_{j}]$ and $h$ is linear on $(a_{j-1},a_{j}]$ with slope $\lambda_j$, $j\in[n]$. Denote by 
$\delta_j=a_j-a_{j-1}$, $j\in[n]$. 
We assert that in this case, the worst-case distribution $G$ of $-S$ of  the problem \eqref{ws}   has the form of
\begin{align} \label{eq:wccdf-discrete}
G &=  [-x_0,s;-x_i,a_i - s;-x_{i+1}, \delta_{i+1}; \ldots; -x_{j},\delta_{j}; 0,1-a_j],
\end{align}
where $0\le x_j\le x_{j-1}\le \cdots\le x_i\le x\le x_0,$  $s\in [0,a_i)$ and   $i,\,j\in [n]$ and $[x_1,p_1;\ldots,x_n,p_n]$ denotes the distribution that assigns probability $p_i$ to $x_i$ for each $i \in [n]$. 
 It suffices to show for any $F \in \cal S(\mu,\sigma)$, there exists $F^*\in \cal S(\mu,\sigma)$ having the form of \eqref{eq:wccdf-discrete} such that
 \begin{equation}
\label{eq-20-229-1}
\rho_h^F\big(\max\{-S,-x\}\big)\le \rho_h^{F^*}\big(\max\{-S,-x\}\big).
\end{equation} Let $S\sim F$ and $U\sim {\rm U}[0,1]$ such that $U$ and $-S$ are comonotonic. Let $s:=\p(-S\le -x)$ 
and  $i_s\in [n]$ be such that $a_{i_s-1}\le s<a_{i_s}$. We   consider  the following three cases.
\begin{enumerate}
\item [(i)] If $s\in (0,1),$ let
$A_0=\{-S\le -x\}$, $A_{i_s} = \{a_{{i_s}-1}<U \le a_{i_s}, -S>-x\}$,  $A_{i} = \{a_{i-1}<U \le a_i\}$, $i=i_s+1,\ldots,n$, and for $\boldsymbol{\varepsilon}=(\varepsilon_{i_s},\ldots,\varepsilon_n)\in \prod_{j=i_s}^n[0, x_j]$, define  
$ \overline{S}_{\boldsymbol{\varepsilon}}
  =: (x_0+\overline{\varepsilon})\id_{A_0} + \sum_{j=i_s}^{n} (x_j-\delta_j)_+\id_{A_j},$
where  $x_i=\E[X|A_i]\ge 0,$ $i\in\{0,i_s,\ldots,n\},$ and $\overline{\varepsilon}\ge 0$ is a constant such that $\E[ \overline{S}_{\boldsymbol{\varepsilon}}]= \mu$.   One can verify  ${\rm Var}(\overline{S}_{\bf 0})\le \sigma^2$ and ${\rm Var}( \overline{S}_{\boldsymbol{\varepsilon}})\ge {\rm Var}(\overline{S}_{\bf 0})$ and ${\rm Var}( \overline{S}_{\boldsymbol{\varepsilon}})$ is continuous and strictly increasing (componentwise) in $\boldsymbol{\varepsilon}\in \prod_{j=i_s}^n[0, x_j]$.  
 If  $
{\rm Var}(\overline{S}_{\boldsymbol{\varepsilon}_x})<\sigma^2$ with $\boldsymbol{\varepsilon}_x=(x_{i_s},\ldots,x_n)$, that is,
$ 
{\rm Var}\left(\frac{\mu}{s}1_{\{U \le s\}}\right) <\sigma^2,
$ 
then there exists $s^*\in (0,s)$ such  that   ${\rm Var}(S^*)=\sigma^2$ where $S^*:=\frac{\mu}{s^*}1_{\{U \le s^*\}}$. Since $\frac{\mu}{s}1_{\{U \le s\}}\preceq_{\rm cx} S^*$,\footnote{For two random variables $X$ and $Y$, $X$ is said to be smaller than $Y$ in convex order, denoted by $X\preceq_{\rm cx}Y$, if $\E[\phi(X)]\le \E[\phi(Y)]$ for all convex functions $\phi$  for which the expectations exist. We refer the reader to \cite{SS07} for further details.} we have $\overline{S}_{\bf 0}\preceq_{\rm cx} S^*$.
If $
{\rm Var}(\overline{S}_{\boldsymbol{\varepsilon}_x})\ge \sigma^2$, 
  then we can find $\boldsymbol{\varepsilon}^*\in  \prod_{j=i_s}^n[0, x_j]$ such that  ${\rm Var}(S^*)=\sigma^2 $ where $S^*:=\overline{S}_{\boldsymbol{\varepsilon}^*}\succeq_{\rm cx}\overline{S}_{\bf 0}$. 
In both cases, we find a random variable $S^*$ with mean $\mu$ and variance $\sigma^2$ such that $\overline{S}_{\bf 0}\preceq_{\rm cx} S^*$ which implies $\max\{-\overline{S}_{\bf 0},-x\}\prec_{\rm icx}\max\{-S^*,-x\}$, and thus, $\rho_h^F\big(\max\{-S,-x\}\big)=\rho_h^F\big(\max\{-\overline{S}_{\bf 0},-x\}\big)\le \rho_h^{F^*}\big(\max\{-S^*,-x\}\big)$, that is, \eqref{eq-20-229-1} holds,
where $F^*$ is the distribution of $S^*$. Observing the structure of  $F^*$,  we get the desired $F^*$.

\item [(ii)] If $s=0$, that is, $-S>-x$ a.s., then letting $A_0=\{U\le a_1\}$, $A_i = \{a_{j-1}<U \le a_j\}$, $i=2,\ldots,n$, and by  similar arguments as in Case (i), we can construct $F^*\in \mathcal {S}(\mu,\sigma)$ such that \eqref{eq-20-229-1} holds.

\item [(iii)]If $s=1$, then $-S\le -x$ a.s. and thus,  $\rho_h^F(\max\{-S,-x\})= \inf_{F\in \cal S(\mu,\sigma)} \rho_h^F\big(\max\{-S,-x\}\big).$ 
It follows that any $F^*\in \mathcal {S}(\mu,\sigma)$ having the form of \eqref{eq:wccdf-discrete} is the desired distribution.
\end{enumerate}
Combining the above three cases, it suffices to consider the distribution  with the form of \eqref{eq:wccdf-discrete}.  That is, 
$\sup_{F\in \mathcal S(\mu,\sigma)} \rho_h^F(\psi(x,p,c))=\sup_{F\in \mathcal S_h(\mu,\sigma)} \rho_h^F(\psi(x,p,c))$, where  $\mathcal S_h(\mu,\sigma)$ is the set of all the distributions in $\mathcal S(\mu,\sigma)$ with the form of \eqref{eq:wccdf-discrete}.  Also by the convexity of $\rho_h$ and $s\mapsto\max\{-s,-x\}$,  we have that $\sup_{F\in \mathcal S(\mu,\sigma)} \rho_h^F(\psi(x,p,c))=\sup_{F\in \overline{\mathcal S}(\mu,\sigma)} \rho_h^F(\psi(x,p,c))$, where $\overline{\mathcal S}(\mu,\sigma)=\bigcup_{t\le \sigma}\mathcal S(\mu,t)$, and thus, 
$$
\sup_{F\in \mathcal S_h(\mu,\sigma)} \rho_h^F(\psi(x,p,c))=\!\!\sup_{F\in \overline{\mathcal S}_h(\mu,\sigma)} \rho_h^F(\psi(x,p,c))=\!\!\sup_{F\in \mathcal S(\mu,\sigma)} \rho_h^F(\psi(x,p,c))=\!\!\sup_{F\in \overline{\mathcal S}(\mu,\sigma)} \rho_h^F(\psi(x,p,c)),
$$
where $\overline{\mathcal S}_h(\mu,\sigma)=\bigcup_{t\le \sigma}\mathcal S_h(\mu,t)$ lies between $\mathcal S_h(\mu,\sigma)$ and $\overline{\mathcal S}(\mu,\sigma)$.
Therefore,  problem \eqref{ws} is equivalent to
  \begin{align}
  \sup_{s\in [0,1],\,1\le i\le j\le n}~~\sup_{ (x_0,x_i,\ldots,x_j) } ~&  -h(s) x - \delta_i^* \lambda_i x_i -\sum_{t=i+1}^j \lambda_t \delta_t x_t 
  \label{eq-20229-5}\\
 {\rm subject ~to} ~~&s x_0+\delta_i^* x_i + \sum_{t=i+1}^j \delta_t x_t  =\mu,\nonumber\\
  &s x_0^2 +\delta_i^* x_i^2 + \sum_{t=i+1}^j \delta_t x_t^2  \le \mu^2+\sigma^2 \nonumber\\
   & -x_0\le -x\le -x_i  \le \cdots\le-x_{j-1}\le  -x_j\le 0
   \nonumber,
\end{align}
\begin{sloppypar}
    
in the sense that the worst case distribution of $-S$ is $[-x_0^*,s^*;-x_{i^*},\delta_{i^*}^*;\ldots;-x_{j^*},\delta_{j^*};0,1-a_{j^*}]$ if $(s^*,i^*,j^*,x_0^*,x^*_{i^*},\ldots,x^*_{j^*})$ is the optimal solution to problem \eqref{eq-20229-5}. 
 Given $s\in [0,1]$ and $j\ge i$, consider the inner  problem of \eqref{eq-20229-5} which is a convex program. 
 By considering  its dual problem, we have its solution  satisfies  $-x_0 = \min\{\ell,-x\}$ and $-x_t=(k \lambda_{k} +\ell)_{-x}^0$, $t=i,\ldots,j$, for some $k\ge 0$ and $\ell$, where  $(a)_x^0= \min\{\max\{a,x\},0\}$. 
Noting that  $k \lambda_{t} +\ell$ is increasing in $t$, if $k\lambda_j+\ell>0$, then  $(k\lambda_j+\ell)_{-x}^0=0$ and we can set $j$ to be a smaller one.  
This implies that    problem   \eqref{eq-20229-5}  is equivalent to \end{sloppypar} \vspace{-\baselineskip}
  \begin{align} \label{eq-20403-1}
  \sup_{s,\,j\ge i,\,k\ge 0,\,\ell } ~&-h(s) x  + \delta_i^* \lambda_i (k \lambda_{i} +\ell)_{-x} +\sum_{t=i+1}^j \lambda_t \delta_t (k\lambda_t+\ell)_{-x}
 \\
 {\rm subject ~to}  ~~& s (\ell)^{-x}+\delta_i^*  (k \lambda_{i} +\ell)_{-x}  + \sum_{t=i+1}^j \delta_t (k\lambda_t+\ell)_{-x}  =\mu,~~~ k\lambda_j+\ell\le 0,\nonumber\\
  &s ((\ell)^{-x})^2 +\delta_i^* ((k \lambda_{i} +\ell)_{-x} )^2 + \sum_{t=i+1}^j \delta_t ((k\lambda_t+\ell)_{-x})^2  \le \mu^2+\sigma^2, \nonumber 
\end{align}
\begin{sloppypar}
    
in the sense that the optimal solution of problem  \eqref{eq-20229-5} is $(s^*,i^*,j^*, \min\{\ell^*,-x\},\max\{k^*\lambda_{i^*}+\ell^*,-x\},\ldots,\max\{k^*\lambda_{j^*}+\ell^*,-x\})$, and thus, the worst case distribution of $-S$ is \end{sloppypar}\vspace{-\baselineskip}
\begin{equation} \label{wc-distribution2}[\min\{\ell^*,-x\},s^*;\max\{k^*\lambda_{i^*}+\ell^*,-x\},\delta_{i^*}^*;\ldots;\max\{k^*\lambda_{j^*}+\ell^*,-x\},\delta_{j^*};0,1-a_{j^*}],
\end{equation}
if $(s^*, i^*,j^*, k^*,\ell^*)$ is the optimal solution to problem \eqref{eq-20403-1}. 
Comparing \eqref{wc-distribution2} and \eqref{eq:wccdf-discrete}, we conclude that $\ell^*\le -x$ if $s^*>0$ and $-x\le k^*\lambda_{i^*}+\ell^*$ if $s^*<t^*$ which completes the proof. To see this, first note that  if $\ell^*> -x$, then  $ 
       \max_{F\in\mathcal {S}(\mu,\sigma)} \rho_h^F(-S\vee -x) =\max_{F\in\mathcal {S}(\mu,\sigma)} \rho_h^F(-S).
    $ 
Note that the worst-case distribution of  $-S$ to the problem $\max_{F\in\mathcal {S}(\mu,\sigma)} \rho_h^F(-S)$ is
    $[k \lambda_{1} +\ell,\delta_1;\ldots;k\lambda_j+\ell,\delta_j;0,1-a_j]$ 
for some $k\ge 0$ and $\ell$. This is   included in \eqref{eq-20122-2} with  $s=0$.
Now suppose that $k^*\lambda_{i^*}+\ell^*<-x$. If $s^*>0$,
then \eqref{wc-distribution2} reduces to 
 $[\ell^*,s^*;-x,\delta_{i^*}^*;\ldots;\max\{k^*\lambda_{j^*}+\ell^*,-x\},\delta_{j^*};0,1-a_{j^*}]$. By case (i), we know this is not an optimal distribution to the problem \eqref{ws}.
  If $s^*=0$, then \eqref{wc-distribution2} reduces to 
 $[-x,\delta_{1};\ldots;\max\{k^*\lambda_{j^*}+\ell^*,-x\},\delta_{j^*};0,1-a_{j^*}]$ which is a feasible distribution of $-S$ to the problem $\sup_{F\in \mathcal S(\mu,\sigma)}\rho_h^F(-S)$ whose optimal distribution is  $[k \lambda_{1} +\ell,\delta_1;\ldots;k\lambda_j+\ell,\delta_j;0,1-a_j]$ 
for some $k\ge 0$ and $\ell$. This is included in \eqref{eq-20122-2} with  $s=0$. 
This completes the proof of the case that $h$ is piecewise linear.

For $h\in \mathcal H_2$, there exists  a sequence of piecewise linear convex distortion functions $h_n\in \mathcal H_2$, $n\in\N$, such that $h_n\uparrow h$ and $\int_0^1 (h_n'(u) - h'(u))^2\d u \to 0$ as $n\to\infty$.  Let $h_0=h$ and for each $n\ge0$, denote by $\mathcal G_n$ the set of all distributions of $G_{s,t}$ whose inverse function is given by \eqref{eq-20122-2} with $h=h_n$,  $k\ge 0$, $\ell\in\R$, and $s\le t$. By the above result for piecewise linear case,  we have  
\begin{equation}\label{eq-201016-1}
\sup_{F\in  \mathcal G_n} \rho_{h_n}^F\big(\psi(x,p,c)\big)=\sup_{F\in \cal S(\mu,\sigma)} \rho_{h_n}^F\big(\psi(x,p,c)\big).  
\end{equation}
Note that for each $F\in \mathcal S(\mu,\sigma)$, we have $\E^F[(\psi(x,p,c))^2]\le 4p^2(\mu^2+\sigma^2)+4(p^2+c^2)x^2=:C$.
 It then follows that $  
 |\rho_{h_n}^F(\psi) - \rho_{h}^F(\psi)|^2\le C\int_0^1 (h_n'(u) - h'(u))^2\d u \to 0 $ as  $n\to\infty$ and thus, 
\begin{equation}\label{eq-201016-3}
\lim_{n\to\infty} \sup_{F\in \cal S(\mu,\sigma)} \rho_{h_n}^F\big(\psi(x,p,c)\big)  =  \sup_{F\in \cal S(\mu,\sigma)} \rho_{h}^F\big(\psi(x,p,c)\big).
\end{equation}
On the other hand,  for each $ G\in \mathcal G_0$, there exists a sequence of distributions $G_n\in\mathcal G_n $, $n\in\N$, such that 
$
 \int_0^1 (G_n^{-1}(u)-G^{-1}(u))^2\d u \to0,$ $n\to\infty.
$
This implies that
$ 
 |\rho_{h_n}^{G_n}(\psi) - \rho_{h}^G(\psi)|^2   \to 0$  as $ n\to\infty,
$  
and thus,
\begin{equation}\label{eq-201019-2}
\lim_{n\to\infty}\sup_{F\in  \mathcal G_n} \rho_{h_n}^F\big(\psi(x,p,c)\big)=  \sup_{F\in  \mathcal G_0} \rho_{h}^F\big(\psi(x,p,c)\big).\end{equation}
Combining \eqref{eq-201016-1}, \eqref{eq-201016-3}, and \eqref{eq-201019-2}, we complete the proof for general distortion functions. 
\end{proof}
 
The next lemma is used to give the explicit form of feasible  $G_{s,t}$ defined by \eqref{eq-20122-2} to the problem~\eqref{ws}.  Note that we can write $G_{s,t}^{-1}(u) = (\overline{k}h_s'(u)+\ell)1_{\{u\le t\}}$ with  $\overline{k}h_s'(t)+\ell\le 0$, where  $$
 h_s(u) =   \frac{h(u)-h(s)}{1-h(s)}\id_{\{s<u\le 1\}},~~~~u\in [0,1]. 
$$  
 \begin{lemma}\label{lm-31}
Given $h\in\mathcal H_2$ and $0\le s\le t\le 1$,    there exists   $(k,\ell)\in\R_+\times\R$ such that
 \begin{align} \label{eq-0409-1}
   \int_0^t k  h_s'(u)+\ell\, \d u =-\mu ~~~{\rm and}~~~
   \int_0^t \left(k h'_s(u)+\ell\right)^2\, \d u  =\mu^2+\sigma^2 
 \end{align}
 only if   $t\ge \frac{1}{1+r^2}$. Moreover,  when $t= \frac{1}{1+r^2} $, the   solution  
 is $k=k_{s,t}:=0$ and $\ell=\ell_{s,t}:=-\mu/t$, and when $t> \frac{1}{1+r^2}$, if a solution exists ($h'_s\not\equiv0$ on $[0,t]$), then the  solution  
 is
 \begin{align}\label{eq-klst}
 k= k_{s,t}: =\frac{\sigma_t}{\sqrt{t\int_0^t \left( h'_s(u)\right)^2\, \d u - (h_s(t))^2}},~~~
   \ell=\ell_{s,t} := -\frac{\mu }{t} - k \frac{h_s(t)}t.
 \end{align}
 \end{lemma}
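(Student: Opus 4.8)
The plan is to solve the two--equation moment--matching system \eqref{eq-0409-1} for $(k,\ell)$ by elimination. First note that since $h\in\mathcal{H}_2$ has $h'\in L^2[0,1]\subset L^1[0,1]$, both $h$ and hence $h_s$ are absolutely continuous on $[0,1]$ (a distortion function is continuous at the endpoints), so $\int_0^t h_s'(u)\,\d u = h_s(t)$. Abbreviating $A := h_s(t) = \int_0^t h_s'(u)\,\d u$ and $B := \int_0^t (h_s'(u))^2\,\d u$, the first equation of \eqref{eq-0409-1} reads $kA + \ell t = -\mu$; since $t=0$ is impossible (the integrals would vanish while $\mu>0$), this gives $\ell = (-\mu - kA)/t$.

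Substituting this into the second equation $k^2 B + 2k\ell A + \ell^2 t = \mu^2+\sigma^2$ and simplifying (the terms linear in $k$ cancel), everything collapses to
\[
k^2\bigl(tB - A^2\bigr) \;=\; t(\mu^2+\sigma^2) - \mu^2 \;=\; \sigma_t^2 .
\]
Now $tB - A^2 \ge 0$ by the Cauchy--Schwarz inequality applied to $h_s'$ and the constant function $1$ in $L^2[0,t]$ (indeed $tB-A^2 = \tfrac12\int_0^t\!\int_0^t (h_s'(u)-h_s'(v))^2\,\d u\,\d v$), with equality iff $h_s'$ is a.e.\ constant on $[0,t]$ --- equivalently, since $h_s'\equiv 0$ on $[0,s]$, iff $h_s'\equiv 0$ on $[0,t]$ whenever $s>0$. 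Hence the left-hand side above is nonnegative, which forces $\sigma_t^2 = t(\mu^2+\sigma^2)-\mu^2 \ge 0$, i.e.\ $t \ge \mu^2/(\mu^2+\sigma^2) = 1/(1+r^2)$. This is the ``only if'' claim.

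For the explicit form: if $t = 1/(1+r^2)$ then $\sigma_t = 0$, so $k^2(tB-A^2)=0$; the pair $(k,\ell)=(0,-\mu/t)$ satisfies the reduced first equation, and directly $\ell^2 t = \mu^2/t = \mu^2+\sigma^2$, so it solves \eqref{eq-0409-1} --- this is the stated $(k_{s,t},\ell_{s,t})$ (when $h_s'$ is a.e.\ constant on $[0,t]$ the $k$--component is not unique, and $0$ is the chosen convention). If $t > 1/(1+r^2)$ then $\sigma_t^2>0$, which forces $tB-A^2>0$, so a solution can exist only when $h_s'$ is non-constant on $[0,t]$ (i.e.\ $h_s'\not\equiv 0$ for $s>0$); taking the nonnegative root gives $k = \sigma_t/\sqrt{tB-A^2}$, which is exactly \eqref{eq-klst} after writing $B=\int_0^t(h_s'(u))^2\,\d u$ and $A=h_s(t)$, and then $\ell = (-\mu - kA)/t = -\mu/t - k\,h_s(t)/t$. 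That this pair genuinely solves \eqref{eq-0409-1} follows because the manipulations are reversible: the first equation holds by the choice of $\ell$, and, given the first, the second is equivalent to $k^2(tB-A^2)=\sigma_t^2$, which holds by the choice of $k$.

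The computation itself is elementary; the only points requiring care are the absolute continuity of $h_s$ (so that $\int_0^t h_s' = h_s(t)$) and the Cauchy--Schwarz equality case, which is precisely the degenerate situation $tB-A^2=0$ where solutions exist only at $t=1/(1+r^2)$ and the $k$--component is non-unique. I do not expect a genuine obstacle.
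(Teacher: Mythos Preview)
Your proof is correct and follows essentially the same approach as the paper: both eliminate $\ell$ from the linear constraint, reduce the second-moment condition to the single equation $k^2\bigl(t\int_0^t(h_s'(u))^2\,\d u - (h_s(t))^2\bigr)=t(\mu^2+\sigma^2)-\mu^2$, and then invoke Cauchy--Schwarz on $h_s'$ and $1$ over $[0,t]$ to obtain the threshold $t\ge 1/(1+r^2)$. Your write-up is in fact more explicit than the paper's, which compresses the derivation of the formulas for $k_{s,t}$ and $\ell_{s,t}$ to ``standard manipulation.''
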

\begin{proof}[Proof.]
  Note that  \eqref{eq-0409-1}  
implies
 $
  k^2 (t\int_0^t \left( h_s'(u)\right)^2 \d u - h_s^2(t) ) =t (\sigma^2+\mu^2) - \mu^2.
 $
By the Cauchy-Schwarz inequality, we have $t\int_0^t \left( h_s'(u)\right)^2 \d u - h_s^2(t)\ge 0$ and the equality holds if $h_s'$ is a constant on $ [0,t]$.  This equation has a solution only if $t\ge \frac{1}{1+r^2}$. 
The solution can be checked by standard manipulation.
 Thus, we complete the proof.  
\end{proof}

By Lemmas \ref{lm-worstdf} and \ref{lm-31},
the worst case distribution of problem \eqref{ws}   lies in  the following set 
$$
 \mathcal S_0 = \left\{F_S \left| F^{-1}_{-S}{(u)} =(k_{s,t} h_s'( u)+\ell_{s,t})\id_{\{u\le t\}},~~(s,t)\in D \right.\right\}  
$$
with $$D=\left\{(s,t)\left| 0\le s\le t,~ t\ge \frac{\mu^2}{\mu^2+\sigma^2},~k_{s,t} h_s'(t)+\ell_{s,t}\le 0 \right.\right\}.$$

\begin{proposition} \label{pr-216-1}
For  $h\in \mathcal H_2$ and $x\ge 0$, the problem \eqref{ws} is equivalent to
  \begin{align}
 \sup_{(s,\,t)\in D}~~&~\left\{ g_x(s,t):=-h(s) x - \frac{\mu} t (h(t)-h(s)) + \frac{\sigma_t}{t} \sqrt{t\int_s^t \left( h'(u)\right)^2\, \d u - (h(t)-h(s))^2} \right\}. 
 \label{eq-216-2} 
\end{align}
\end{proposition}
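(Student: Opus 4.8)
The plan is to use Lemmas~\ref{lm-worstdf} and~\ref{lm-31} to collapse the inner supremum onto the explicit two-parameter family $\{G_{s,t}\}$, evaluate $\rho_h$ on that family in closed form, and then match the two resulting bounds. As a preliminary reduction, note that a convex $h$ with $h(1)=1$ is absolutely continuous on $[0,1]$, so $\d h(u)=h'(u)\,\d u$ and $\rho_h(Y)=\int_0^1 F_Y^{-1}(u)\,h'(u)\,\d u$ for any $Y\in L^2$. Combining this with $\psi(x,p,c)=p\max\{-S,-x\}+cx$ and the positive homogeneity and translation invariance of $\rho_h$, equation \eqref{ws} tells us it is enough to prove
\[
\sup_{F\in\mathcal S(\mu,\sigma)}\rho_h^F\bigl(\max\{-S,-x\}\bigr)=\sup_{(s,t)\in D}g_x(s,t).
\]

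For the inequality ``$\le$'' I would invoke Lemma~\ref{lm-worstdf}: the left-hand supremum is attained by a distribution whose $-S$-quantile has the form \eqref{eq-20122-2} for some $k\ge0$, $\ell\in\R$, $s\le t$ with $\ell\le-x$ when $s>0$ and $kh'(u)+\ell\ge-x$ for $u>s$ when $s<t$. Because this distribution belongs to $\mathcal S(\mu,\sigma)$, its quantile satisfies the two moment equations \eqref{eq-0409-1}, which (using $h_s$ as defined just before Lemma~\ref{lm-31}) force the coefficient pair to be $(k_{s,t},\ell_{s,t})$ and force $t\ge\mu^2/(\mu^2+\sigma^2)$; together with $k h'(t)+\ell\le0$ this is exactly the statement $(s,t)\in D$. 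The two side conditions on $\ell$ and $kh'(u)+\ell$ then make the $-x$-truncation transparent: $\max\{G_{s,t}^{-1}(u),-x\}$ equals $-x$ on $[0,s]$, equals $G_{s,t}^{-1}(u)$ on $(s,t]$, and equals $0$ on $(t,1]$. Integrating against $h'$ and substituting \eqref{eq-klst} gives precisely $g_x(s,t)$, so the optimal value is $\le\sup_{(s,t)\in D}g_x$.

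For ``$\ge$'' I would check that every $(s,t)\in D$ is feasible on the other side: the distribution $G_{s,t}$ built from $(k_{s,t},\ell_{s,t})$ has a nondecreasing, nonpositive quantile function (monotonicity of $h_s'$ since $h$ is convex, together with the constraint $k_{s,t}h_s'(t)+\ell_{s,t}\le0$ in $D$), hence is a cdf on $\R_+$, and it has first two moments $\mu$ and $\mu^2+\sigma^2$ by \eqref{eq-0409-1}. Since $\max\{a,-x\}\ge a$, $\max\{a,-x\}\ge-x$, and $h'\ge0$,
\[
\rho_h^{G_{s,t}}\bigl(\max\{-S,-x\}\bigr)=\int_0^1\max\{G_{s,t}^{-1}(u),-x\}\,h'(u)\,\d u\ \ge\ \int_0^s(-x)\,h'(u)\,\d u+\int_s^t G_{s,t}^{-1}(u)\,h'(u)\,\d u,
\]
and the right-hand side equals $g_x(s,t)$ by the same computation as above. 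Hence $g_x(s,t)\le\sup_F\rho_h^F(\max\{-S,-x\})$ for every $(s,t)\in D$, and taking the supremum over $D$ closes the chain; an optimal $(s^\ast,t^\ast)$ then identifies the worst-case distribution as $G_{s^\ast,t^\ast}$.

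The step I expect to demand genuine care is the algebraic simplification $\int_0^s(-x)h'(u)\,\d u+\int_s^t G_{s,t}^{-1}(u)h'(u)\,\d u=g_x(s,t)$: it hinges on tracking the normalization between $h'$ and $h_s'=h'/(1-h(s))$ on $(s,1]$ and on the cancellation $\tfrac{\sigma_t}{t\,\Delta_{s,t}}\bigl(t\!\int_s^t(h'(u))^2\,\d u-(h(t)-h(s))^2\bigr)=\tfrac{\sigma_t\Delta_{s,t}}{t}$. One small case must be dispatched separately, namely $t=\mu^2/(\mu^2+\sigma^2)$ (equivalently $\sigma_t=k_{s,t}=0$), where $G_{s,t}$ degenerates to a two-point distribution and $g_x(s,t)=-xh(s)-\tfrac{\mu}{t}(h(t)-h(s))$; in every other point of $D$ one has $\Delta_{s,t}>0$, so $g_x$ and $G_{s,t}$ are well defined.
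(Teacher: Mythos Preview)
Your proposal is correct and follows essentially the same route as the paper: both invoke Lemmas~\ref{lm-worstdf} and~\ref{lm-31} to restrict to the family $G_{s,t}$ with parameters $(k_{s,t},\ell_{s,t})$, use the side conditions from Lemma~\ref{lm-worstdf} to remove the $\max\{\cdot,-x\}$ truncation at the optimizer, and then verify the algebraic identity $-h(s)x+\int_s^t(k_{s,t}h_s'(u)+\ell_{s,t})h'(u)\,\d u=g_x(s,t)$. The only cosmetic difference is that you organize the argument as two inequalities (``$\le$'' via the optimal $(s^*,t^*)$, ``$\ge$'' via the pointwise bound $\max\{G_{s,t}^{-1}(u),-x\}\ge-x$ on $[0,s]$ and $\ge G_{s,t}^{-1}(u)$ on $(s,t]$), whereas the paper passes through the intermediate problem \eqref{eq-226-7} and then notes that restricting to $(s,t)$ with the side conditions does not change its value.
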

\begin{proof}[Proof.]
 By Lemmas \ref{lm-worstdf} and \ref{lm-31}, we have the problem \eqref{ws} is equivalent to
 \begin{align} 
    \sup_{(s,\,t)\in D}  ~&~ h(s) \max\{-x,\ell_{s,t}\} + \int_s^t \max\left\{ k_{s,t} h_s'(u) + \ell_{s,t} ,-x\right\} h'(u) \d u \label{eq-226-7} 
\end{align}
in the sense that  the worst case distribution of $-S$ of the problem \eqref{ws} is  $F_{-S}^{-1}(u)=(k_{s^*,t^*} h_{s^*}'( u)+\ell_{s^*,t^*})\id_{\{u\le t^*\}}$ if $(s^*,t^*)$ is the optimal solution of problem \eqref{eq-226-7}. Note that by  Lemma \ref{lm-worstdf}, it suffices to consider  $(s,t)\in D$ such that  $\ell_{s,t}\le -x$ if $s>0$ and $-x\le k_{s,t}h'_s(s+)+\ell_{s,t}$ if $s<t$. It then follows immediately that problem \eqref{eq-226-7} is  equivalent to 
\begin{align} 
    \sup_{(s,\,t)\in D}  ~&~\left\{-h(s) x + \int_s^t  ( k_{s,t} h_s'(u) + \ell_{s,t} ) h'(u) \d u = g_x(s,t)\right\}.  
\end{align} 
 Therefore,   the problem  \eqref{ws}  is equivalent to the problem \eqref{eq-216-2}.
   Thus, we complete the proof. 
\end{proof}

 To solve the problem \eqref{eq-216-2}, we give the following observations. 
It is obvious that the objective function $g_x(s,t)$ is continuous in $(s,t)\in D$.
\begin{lemma}\label{lm-3-7}
For $x\ge 0$,   denote  by $(s_0(x),t_0(x))$ one  maximizer of $g_x(s,t)$ over $D$. Then
\begin{itemize} \item  [(a)] $s_0(x)$ is decreasing in $x\ge 0$.
\item [(b)] $g_x(s_0(x),t_0(x)) =\sup_{(s,t)\in D} g_x(s,t)$ is continuous in $x\ge 0$.
\item [(c)] $\lim_{x\to0} s_0(x) =\frac{\mu^2}{\mu^2+\sigma^2}$ and thus the set $D$ can be replaced by $$
D' = \left\{(s,t)\left| 0\le s\le \frac{\mu^2}{\mu^2+\sigma^2} \le t\le 1,~k_{s,t} h_s'(t)+\ell_{s,t}\le 0 \right.\right\}.
$$
\item [(d)] If $s_0(x)=s^*$, then $t_0(x)=t^*$, where $t^*$ is given by \eqref{eq:tsatr}.
\end{itemize}
\end{lemma}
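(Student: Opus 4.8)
The plan is to analyze the parametrized optimization problem $\sup_{(s,t)\in D} g_x(s,t)$ as $x$ varies, exploiting the explicit structure of $g_x$. Observe first that $x$ enters $g_x(s,t)$ only through the term $-h(s)x$; everything else is a fixed function of $(s,t)$. Write $g_x(s,t) = \phi(s,t) - x\,h(s)$, where $\phi(s,t) := -\frac{\mu}{t}(h(t)-h(s)) + \frac{\sigma_t}{t}\sqrt{t\int_s^t (h'(u))^2\,\d u - (h(t)-h(s))^2}$. For part~(a), I would use the standard monotone-selection argument for the argmax of a family of functions with decreasing differences: if $x_1 < x_2$ and $s_1 = s_0(x_1)$, $s_2 = s_0(x_2)$ are corresponding maximizers, then optimality gives $g_{x_1}(s_1,t_1) \ge g_{x_1}(s_2,t_2)$ and $g_{x_2}(s_2,t_2) \ge g_{x_2}(s_1,t_1)$; adding these and cancelling the $\phi$ terms yields $(x_2 - x_1)(h(s_1) - h(s_2)) \ge 0$, hence $h(s_1) \ge h(s_2)$, and since $h$ is nondecreasing this forces $s_1 \ge s_2$ (choosing the selection appropriately when $h$ is flat). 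Part~(b) is a routine application of the maximum theorem (Berge): $D$ is compact, $g_x$ is jointly continuous in $(s,t,x)$, so the value function $x \mapsto \sup_{(s,t)\in D} g_x(s,t)$ is continuous.

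For part~(c), the key observation is that as $x \to \infty$ the penalty $-x\,h(s)$ forces any maximizer to have $h(s_0(x)) \to 0$, i.e. $s_0(x) \to 0$; conversely, I need the limit as $x \to 0$. At $x = 0$ the objective reduces to $\phi(s,t)$, and I would show $\phi$ is maximized by taking $s$ as large as feasible. The constraint in $D$ that couples $s$ and $t$ is $k_{s,t}h'_s(t) + \ell_{s,t} \le 0$, which from Lemma \ref{lm-31} translates into a bound forcing $s \le \mu^2/(\mu^2+\sigma^2)$ at the relevant $t$; combined with $s \le t$ and the structure of $\phi$ (increasing in $s$ on the feasible region because enlarging $s$ shrinks the "spread" region $(s,t]$ in a way that raises the worst-case mean contribution toward $\mu$), the supremum at $x=0$ is attained at $s = \mu^2/(\mu^2+\sigma^2)$ with $t$ also pinned there. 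Then by the monotonicity in (a) and continuity in (b), $\lim_{x\to 0} s_0(x) = \mu^2/(\mu^2+\sigma^2)$, which is exactly the left endpoint of the admissible $s$-range; this lets me replace $D$ by $D'$ (where $s$ is capped at $\mu^2/(\mu^2+\sigma^2) \le t$) without loss, since for every $x \ge 0$ the maximizer already lies in $D'$.

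Part~(d) is the substantive one: assuming $s_0(x) = s^*$ (the threshold value $h^{-1}(\beta)$ that later turns out to be the relevant one), I must show the companion maximizer is $t^*$ given by \eqref{eq:tsatr}. Here I would fix $s = s^*$ in $g_x$ and optimize over $t \in [\mu^2/(\mu^2+\sigma^2), 1]$ subject to $k_{s^*,t}h'_{s^*}(t) + \ell_{s^*,t} \le 0$. Differentiating $t \mapsto g_x(s^*,t)$ (or rather $t \mapsto \phi(s^*,t)$, since the $x$-term is $t$-independent) and setting the derivative to zero should reproduce, after algebraic simplification, precisely the boundary of the constraint region in \eqref{eq:tsatr}, namely $(1+r^2)(th'(t) - h(t)+\beta)^2 = \int_0^1 (h'(u) - h(t)+\beta)^2\,\d u$; and I expect to show that $\phi(s^*,\cdot)$ is unimodal (increasing then decreasing) in $t$ so that the maximizer is the largest $t$ satisfying the inequality, matching the "$\max$" in the definition of $t^*$. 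The main obstacle is this last computation: verifying that the first-order condition for the unconstrained-in-$t$ maximum coincides with the constraint surface $k_{s^*,t}h'_{s^*}(t)+\ell_{s^*,t} = 0$, and that whenever the unconstrained optimum violates feasibility the constrained optimum sits exactly on that surface — this requires carefully tracking how $\sigma_t$, $k_{s,t}$, and $\Delta_{s^*,t}$ depend on $t$ and showing the derivative factors through $(k_{s^*,t}h'_{s^*}(t)+\ell_{s^*,t})$ times a positive quantity. Establishing monotonicity of the left-hand side of the inequality in \eqref{eq:tsatr} (needed both for the bisection remark and to justify "largest such $t$") is the other delicate piece, handled by a direct sign analysis of its $t$-derivative using convexity of $h$.
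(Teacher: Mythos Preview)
Your treatment of parts (a) and (b) matches the paper's: the same two-point optimality comparison for monotonicity in (a), and in (b) the paper gives a direct Lipschitz bound $|g_y - g_x| \le |y-x|$ rather than invoking Berge, but either works. For (c) your plan to argue directly that $\phi(s,t)$ is increasing in $s$ on the feasible set is plausible, though the paper sidesteps this by appealing to the convergence of the worst-case distribution $F_x$ (from Lemma~\ref{lm-worstdf}) to the two-point law $F_0 = (1-s_0)\delta_0 + s_0\delta_{\mu/s_0}$ with $s_0 = \mu^2/(\mu^2+\sigma^2)$ as $x \to 0$; your direct route is fine provided you make the monotonicity-in-$s$ claim precise.

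The real divergence --- and a genuine gap --- is in part (d). You plan to locate $t^*$ as an interior critical point of $t \mapsto \phi(s^*, t)$, expecting the first-order condition to coincide with the boundary equation in \eqref{eq:tsatr} and $\phi(s^*,\cdot)$ to be unimodal. But the paper shows that $g_0(s,t)$ is in fact \emph{monotone increasing} in $t$ on the feasible region, not increasing-then-decreasing: it proves this via the variational representation
\[
\frac{g_0(s,t)}{1-h(s)} \;=\; \sup\Bigl\{ \int_0^t F^{-1}(u)\,\d h_s(u) \ :\ \int_0^t F^{-1} = -\mu,\ \int_0^t (F^{-1})^2 = \mu^2+\sigma^2 \Bigr\},
\]
noting that any feasible $F^{-1}$ on $[0,t]$ (which is $\le 0$ by the constraint $k_{s,t}h'_s(t)+\ell_{s,t}\le 0$) extends by zero to a feasible one on $[0,t']$ for $t' > t$. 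Consequently there is no interior stationary point in $t$, and your proposed first-order equation will not reproduce the constraint surface. The optimal $t$ is always the largest \emph{feasible} one, so $t^*$ is determined purely by the constraint $k_{s^*,t}h'_{s^*}(t) + \ell_{s^*,t} \le 0$; the paper rewrites this as $w(t) \le 0$ and shows $w$ is increasing in $t$, yielding the ``largest such $t$'' characterization in \eqref{eq:tsatr}. Your final remark about proving monotonicity of the left-hand side of \eqref{eq:tsatr} is exactly this $w$-monotonicity step, so you have that piece; what you are missing is replacing the unimodality/FOC plan for $\phi$ with the monotone-in-$t$ argument for $g_0$.
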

\begin{proof}[Proof.]
(a) Denote by  
  $ g_x(s,t) =  -h(s)x + g_0(s,t)$.
 Note that $g_x(s,t)$ is continuous in $(s,t)\in D$ and $D$ is a compact set. Thus, the maximizer is attainable.  For $x<y$, consider the following  problem
\begin{align}\label{eq-20219-3}
    \max_{(r,t_1)\in D} g_{x}(r,t_1)+ \max_{(s,t_2)\in D} g_{y}(s,t_2) = \max_{(r,t_1,s,t_2)\in D\times D} \{g_{x}(r,t_1)+ g_{y}(s,t_2)\}.
\end{align} 
Denote by $(r^*,t_1^*,s^*,t_2^*)$ one maximizer of the   problem \eqref{eq-20219-3}. 
Note that $g_{x}(r^*,t_1^*)+ g_{y}(s^*,t_2^*) \ge g_{x}(s^*,t_2^*)+ g_{y}(r^*,t_1^*)$, that is,
$ 
 xh(r^*) +   yh(s^*)   \ge  xh(s^*) +  yh(r^*)  .
$ 
This implies 
$
 (x -y) (h(r^*)-h(s^*)) >0,
$
that is, $h(r^*)<h(s^*)$. It then follows  that $r^*<s^*$.  Hence,  $s_0(x)$ is decreasing in $x$.

   (b) The continuity of  $x\mapsto g_{x}(s_0(x),t_0(x))$ follows immediately by noting that $x<y$, 
\begin{align*}
    |g_{y}(s_0(y),t_0(y)) -g_{x}(s_0(x),t_0(x)) | 
   & \le \max_{(s,t)\in D} | g_x(s,t) -   g_y(s,t)| \le y-x.
\end{align*} 

(c)   For $x\ge 0$, denote by $F_x$
 the worst-case distribution of the problem \eqref{ws} and let $S_x\sim F_x$. By Lemma \ref{lm-worstdf}, $S_x$ takes constant value when it is larger than $x$. As $x\to0$,   the distribution $F_x$  converges to   $F_0 := (1-s)\delta_0 +s\delta_{\mu/s}$.  
Note that $S_x$ converges to $S_0$ in $L^\infty$-norm and $s_0(0)=\frac{\mu^2}{\mu^2+\sigma^2}$. We have $\lim_{x\to0} s_0(x) =s_0(0)$ and the set $D'$ follows immediately by the monotonicity of $s_2.$ 

(d) First note that for given $s$, $g_x(s,t)$,  or equivalently, $g_0(s,t)$ is increasing in $t\ge {\mu^2}/({\mu^2+\sigma^2})$.  To see it, note that for $(s,t)\in D$,
\begin{align}
\frac{g_0(s,t)}{1-h(s)}~=~~\sup_{F}&~ \int_0^t F^{-1}(u) \d h_s(u) \label{eq-0305-4}\\
 {\rm subject~to}&~ \int_0^t F^{-1}(u)\d u =-\mu ,~~ \int_0^t( F^{-1}(u))^2\d u =\mu^2 +\sigma^2. \nonumber
\end{align}
By $(s,t)\in D$,  the optimal $F$ satisfies $ F^{-1} \le 0$ on $[0,t]$ and thus can be extended to $[0,t']$ as a feasible solution to problem \eqref{eq-0305-4} with $t$ replaced by $t'>t$. It follows that the optimal value of the  problem \eqref{eq-0305-4}, $\frac{g_0(s,t)}{1-h(s)}$,  is increasing in $t$,
and thus, $g_0(s,t)$  is  increasing in $t$.
Moreover, by standard manipulation,    $k_{s,t} h'_s(t)+\ell_{s,t}\le 0$
is equivalent to
$$
w(t):=   (1+r^2)(th_{s}'(t)-h_{s}(t))^2 - \int_0^1 (h_{s}'(u)-h_{s}(t))^2\d u\le0. 
$$
By taking derivative of $w$, we have 
$w$ is  increasing. Therefore,
the optimal solution of $t$ of $\max_{t}g_x(s,t)$ with fixed $s$ is the largest $t$ such that $w(t)\le 0$, that is,    $t^*$  is defined by  \eqref{eq:tsatr}. 
This completes the proof.   
\end{proof}

Now we are ready to prove Theorem \ref{main-th}. 
\begin{proof}[Proof of Theorem \ref{main-th}.]
For $x\ge 0$, denote by $(s_0(x),t_0(x))$ one  maximizer of $g_x(s,t)$ over $D$ and $A=\{x\ge 0| ph(s_0(x))=c\}.$
Then by Lemma \ref{lm-3-7}, the optimal value of problem    \eqref{ws} is
\begin{align*}
T(x)&:=pg_x(s_0(x),t_0(x)) +cx= (c-ph(s_0(x))) x  + pg_0(s_0(x),t_0(x)). 
\end{align*} 
\begin{itemize}
\item [(a)] If $A\not=\emptyset$, then we assert that the minimizer of $T(x)$ satisfies  
$ 
 ph(s_0(x)) =c.
$ 
To see it, first note that by  that $h$ is increasing and $s_0$ is  decreasing, the set $A$ is an interval. Denote by
$$
 x_1= \inf\{x\ge 0| ph(s_0(x))=c\}~~{\rm and}~~x_2=\sup\{x\ge 0| ph(s_0(x))=c\}.
$$
We first show $T(x)$ is decreasing in $0\le x\le x_1$ and increasing in $x\ge x_2$ by contradiction. 
 Suppose first $T(x)$ is not decreasing in $x<x_1$. Then there exist $x <y \le x_1$ such that $T(x)<T(y)$, that is,
\begin{align*}
(c-ph(s_0(x)) )x  + pg_0(s_0(x),t_0(x))  & < (c-ph(s_0(y))) y  + pg_0(s_0(y),t_0(y))\\
& \le  (c-ph(s_0(y))) x  + pg_0(s_0(y),t_0(y)),
\end{align*}
where the second inequality is due to $ c-ph(s_0(y)) \le  0$.  This implies
$ 
g_x(s_0(x),t_0(x)) 
< g_{x}(s_0(y),t_0(y))
$ 
which yields a contradiction with the maximality of $(s_0(x),t_0(x))$. Hence, we have $T(x)$ is decreasing in $x\le x_1$.  The monotonicity of $T$ in $x>x_2$ follows by similar arguments.
Therefore, we have the minimizer of $T(x)$ is the points satisfying $ph(s_0(x))=c$ and in this case, $$T(x)=pg_0(s_0(x),t_0(x))=- \frac{\mu p} t (h(t)-h(s)) + p\frac{\sigma_t}{t} \sqrt{t\int_s^t \left( h'(u)\right)^2\, \d u - (h(t)-h(s))^2}.$$

\item [(b)] If $A=\emptyset$, we have $ph(s_0(x)) < c$  for all $x\ge 0$ or $ph(s_0(x)) > c$  for all $x\ge 0$.  By the monotonicity of $s_0(x)$, we have $\lim_{x\to\infty} s_0(x)=0$. Otherwise  there exists $\delta>0$ such that $s_0(x)>\delta$ for any $x$, and then $\delta (-x)^2 > \mu^2+\sigma^2$ for some $x$.  This implies there exists $x\ge 0$ such that  $ph(s_0(x))<c$.
 Then we have $ph(s_0(x)) < c$  for all $x\ge 0$.  By similar arguments in Case (a), we have $T(x)$ is increasing in $x\ge0$ and thus the optimal value of $x$ is $x^*=0.$
\end{itemize}
Combining the above analysis, we have the following result.
\begin{itemize}
 \item [(i)]  If $r \ge \sqrt{{1}/{s^*} - 1}$, or equivalently, {$h\left(  \frac1{1+r^2}\right)\le \beta$,} then  by Lemma \ref{lm-3-7} (c), we have $\lim_{x\to 0} (c-ph(s_0(x)) = c-ph\left(  \frac1{1+r^2}\right)\ge 0$.
This implies  $T(x)$ is increasing in $x\ge 0$. Therefore, the optimal value of $x$   is $x^*=0$. 
The optimal value and worst case distribution follows from that 
     when $x=0$, the inner problem of  problem  \eqref{wc-0} is equivalent to
 $ 
  p \sup_{F\in \mathcal {S}(\mu,\sigma)} \rho_h^F(-S\vee 0) =p\rho_h^F(0) =0.
 $

\item[(ii)] 
If  $r < \sqrt{{1}/{s^*} - 1}$,  or equivalently,  {$h\left(  \frac1{1+r^2}\right) > \beta$}, 
then the optimal value of $x$ is those points satisfying $ph(s_0(x))=c$  and for such $x$, the worst case distribution $G$ of the problem \eqref{ws} satisfies $ -G^{-1}(1-\alpha) =G_{s^*}^{-1}(\alpha)$ defined by \eqref{eq-20122-2},
and thus, the problem \eqref{ws} is
 \begin{align} \label{20200229-10}
 \sup_{F\in\mathcal S(\mu,\sigma)} \rho_{h_{s^*}}^F(-S\vee -x)
 \end{align}
 where  $s^*= h^{-1}(\beta)$.
We then consider the following two subcases.
\begin{itemize}

\item [(i.a)] 
If
  $
 -\mu + \sigma \frac{h'(1)-(1-\beta)}{\Delta_{s^*,1}}\le 0,
$  or equivalently, $r \leq \frac{\Delta_{s^*,1}}  { h'(1)-(1-\beta)}$,  then
 the optimal value of \eqref{20200229-10} is attained at the worst case distribution  $G$ given by \eqref{eq:wcdistributoin-2}
as $G$  is  a feasible solution to  problem \eqref{ws} and a worst-case distribution of  the following problem
$
 \sup~\rho_{h_{s^*}}^F(-S\vee -x)~
  {\rm subject~to} ~ \E^F[S] =\mu,~
  {\rm Var}^F(S)=\sigma^2.
$ It then follows the optimal value.
 Note that for $x$ with $ph(s_0(x))=c$, the maximizer $s_0(x)\in (0,1/(1+r^2))$ and $t_0(x)=1$, that is, 
  \begin{align*}
g_x(s^*,t)   &= \max_{s\in (0,1/(1+r^2))}g_x(s,1) \\
& =\max_{s\in (0,1/(1+r^2))}-\mu-(x-\mu)h (s) +\sigma  {\sqrt{\int_{s}^1(h'(u))^2\d u -(1-h({s}))^2} }.
\end{align*}
Note that $s\mapsto g_x(s,1)$ has left- and right-derivatives everywhere, and thus, $s_0(x)$ satisfies
\begin{equation}\label{eq-200301-1-1}
\frac{\partial^- }{\partial s}g_x(s,1)|_{s=s^*} \le 0\le  \frac{\partial^+ }{\partial s}g_x(s,1)|_{s=s^*}
~~{\rm or}~~
\frac{\partial^- }{\partial s}g_x(s,1)|_{s=s^*} \ge 0\ge  \frac{\partial^+ }{\partial s}g_x(s,1)|_{s=s^*},
\end{equation}
where $({\partial^- }/{\partial s})g $ or $({\partial^+ }/{\partial s})g$ denote  the left and right-derivative of $g$, respectively. 
By calculating the left-and right-derivatives,  we have the optimal solution is given by \eqref{eq:solutionx-2}.

\item  [(ii.b)] 
If $r > \frac{\Delta_{s^*,1}}  { h'(1)-(1-\beta)}$, 
note that for those $x$ satisfying $ph(s_0(x))=c$, by (d) of Lemma \ref{lm-3-7}, the optimal value of $t$ is  $t^*$ and 
$$
 g_x(s,t^*) = -h(s)x -\frac{\mu}{t^*} (h(t^*)-h(s)) + \frac{\sigma_{t^*}}{t^*}\sqrt{t^*\int_s^{t^*} \left( h'(u)\right)^2\, \d u - (h(t^*)-h(s))^2}.
$$
 The optimal value of $x$ and the worst case distribution follow by similar arguments in Case (ii). 
 \end{itemize}
 \end{itemize}
Combining the above two cases, we complete the proof. 
\end{proof}

\section{Multi-Product Risk-Averse Newsvendor}
Extending the risk-averse newsvendor model to multiple items is natural but analytically challenging. In the risk-neutral case, the extension is straightforward: the expected total profit decomposes additively across items. In contrast, incorporating risk measures introduces significant complexity—both analytically and computationally (see \cite{CRZ11}). It is worth noting that  while \cite{CRZ11} derive closed-form approximations, their results rely on the assumption of independent demand across items. We consider the following multi-product risk-averse newsvendor model:
\begin{equation}
\label{eq-0413-1}
\rho_h^{\rm wc}(\boldsymbol{\mu},\boldsymbol{\sigma})=\inf_{\boldsymbol{x}
\in \R_{+}^n
} \sup_{F\in \cal S(\boldsymbol{\mu},\boldsymbol{\sigma})} \rho_h^F\left(\sum_{i=1}^n\psi _i (x_i,p_i,c_i)\right), 
\end{equation}
where $\rho_h^F$ denotes the distortion functional $\rho_h$ evaluated under the distribution $F$ of $\boldsymbol{S} = (S_1, \ldots, S_n)$, and:
$$
\psi_i(x,p_i,c_i)= p_i\max\{-S_i,-x\} + c_ix,~~i=1,\ldots,n,
$$
$${\cal S}(\mu,\sigma) =\{F~\text{cdf on}~[0,\infty)^n: \E^F[\boldsymbol{S}]=\boldsymbol{\mu},~  {\rm Var}^F(S_i)=\sigma^2_i,~~i=1,\ldots,n\}.$$
The model captures distributional ambiguity and allows for arbitrary dependence structures across item demands.

Notably, in the above multi-product setting, the optimal order quantities can be determined by solving single-product problems separately—a result that may initially appear counterintuitive.
\begin{theorem}
Given $\boldsymbol{\mu} \in \R^n$, $\boldsymbol{\sigma}\in \R_{+}^n$ and a convex distortion function $h$, the optimal value of  problem \eqref{eq-0413-1} is
\begin{equation}\label{20-414-2}
\rho_h^{\rm wc}(\boldsymbol{\mu},\boldsymbol{\sigma}) 
=\sup_{F\in \cal S(\boldsymbol{\mu},\boldsymbol{\sigma})} \rho_h^F\left(\sum_{i=1}^n\psi _i (x_i^*,p_i,c_i)\right) 
=\sum_{i=1}^n \rho_h^{\rm wc}(\mu_i,\sigma_i), 
\end{equation}
where $x_i^*$ and $\rho_h^{\rm wc}(\mu_i,\sigma_i)$ are given by Theorem \ref{main-th} with $\mu=\mu_i$ and $\sigma=\sigma_i$, $i\in[n]$. 
\end{theorem}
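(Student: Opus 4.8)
The plan is to exploit the comonotonic additivity of the distortion functional $\rho_h$ together with the structure of worst-case distributions established in Lemma~\ref{lm-worstdf}. The key observation is that $\rho_h$ is additive over comonotonic random variables, so for \emph{any} feasible $\boldsymbol{x}$ and \emph{any} choice of worst-case single-product distributions $F_i \in \mathcal{S}(\mu_i,\sigma_i)$, if we can couple $S_1,\dots,S_n$ so that the loss terms $\psi_i(x_i,p_i,c_i) = p_i\max\{-S_i,-x_i\} + c_i x_i$ are \emph{mutually comonotonic}, then $\rho_h^F\big(\sum_i \psi_i(x_i,p_i,c_i)\big) = \sum_i \rho_h^{F_i}\big(\psi_i(x_i,p_i,c_i)\big)$. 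This would give the lower bound $\rho_h^{\rm wc}(\boldsymbol\mu,\boldsymbol\sigma) \ge \sum_i \rho_h^{\rm wc}(\mu_i,\sigma_i)$: pick $x_i^*$ and the corresponding worst-case $F_i$ from Theorem~\ref{main-th}, comonotonically couple them, and observe that this joint distribution lies in $\mathcal{S}(\boldsymbol\mu,\boldsymbol\sigma)$ (the constraint set only restricts marginals, not dependence), so the inner supremum over $F \in \mathcal{S}(\boldsymbol\mu,\boldsymbol\sigma)$ is at least $\sum_i \rho_h^{\rm wc}(\mu_i,\sigma_i)$ for every $\boldsymbol x$, hence so is the outer infimum—wait, this needs care in the direction of the inequality, so let me be precise below.

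First I would establish the easy direction: for any fixed $\boldsymbol x \in \R_+^n$,
\[
\sup_{F\in\mathcal{S}(\boldsymbol\mu,\boldsymbol\sigma)} \rho_h^F\Big(\sum_{i=1}^n \psi_i(x_i,p_i,c_i)\Big) \le \sum_{i=1}^n \sup_{F_i\in\mathcal{S}(\mu_i,\sigma_i)} \rho_h^{F_i}\big(\psi_i(x_i,p_i,c_i)\big),
\]
which follows from subadditivity of $\rho_h$ (a coherent risk measure) together with the fact that the marginal of any $F\in\mathcal{S}(\boldsymbol\mu,\boldsymbol\sigma)$ on coordinate $i$ lies in $\mathcal{S}(\mu_i,\sigma_i)$. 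Taking the infimum over $\boldsymbol x$, and noting the right-hand side separates, gives $\rho_h^{\rm wc}(\boldsymbol\mu,\boldsymbol\sigma) \le \sum_i \rho_h^{\rm wc}(\mu_i,\sigma_i)$. For the reverse inequality, fix any $\boldsymbol x$; I want to show the inner sup is at least $\sum_i \rho_h^{\rm wc}(\mu_i,\sigma_i)$. Here I use Lemma~\ref{lm-worstdf}: the worst-case distribution for the single-product problem at order level $x_i$ has quantile function $G_{s_i,t_i}^{-1}$ that is \emph{monotone} in $u$; composing a common uniform $U$ through each of these quantile functions produces a comonotonic coupling of the $-S_i$, hence of the $\psi_i(x_i,p_i,c_i)$ (each a nondecreasing function of $-S_i$). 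Under this coupling $\rho_h^F(\sum_i \psi_i) = \sum_i \rho_h^{F_i}(\psi_i)$, and choosing each $F_i$ to be the single-product worst case at order $x_i$ gives the inner sup $\ge \sum_i \sup_{F_i} \rho_h^{F_i}(\psi_i(x_i,p_i,c_i))$. Taking the infimum over $\boldsymbol x$ then yields $\rho_h^{\rm wc}(\boldsymbol\mu,\boldsymbol\sigma) \ge \inf_{\boldsymbol x}\sum_i \sup_{F_i}\rho_h^{F_i}(\psi_i(x_i,p_i,c_i)) = \sum_i \rho_h^{\rm wc}(\mu_i,\sigma_i)$, the last equality by separability of the objective. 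Combining the two bounds gives the middle equality of \eqref{20-414-2}, and the fact that the single-product optimizers $x_i^*$ attain it (under the comonotonic coupling of the corresponding worst-case marginals) gives the first equality.

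The main obstacle I anticipate is the measure-theoretic bookkeeping in the lower bound: I need the comonotonic coupling of the $-S_i$ to (a) actually have each prescribed marginal $F_i \in \mathcal{S}(\mu_i,\sigma_i)$, which is immediate since comonotone coupling preserves marginals, and (b) genuinely make the $\psi_i$ pairwise comonotonic so that $\rho_h$ is additive across all $n$ terms, not just two—this requires the comonotonic-additivity property of distortion functionals in the $n$-fold form, which is standard (e.g., via the representation $\rho_h(X)=\int_0^1 \VaR_\alpha(X)\d h(\alpha)$ and the fact that $\VaR_\alpha$ is additive over comonotone vectors), but should be cited cleanly. One subtlety: Lemma~\ref{lm-worstdf} gives the worst-case distribution for a \emph{given} $x$, and in the lower-bound argument I am coupling worst-case distributions at an \emph{arbitrary} $\boldsymbol x$ rather than at the optima; this is fine because the argument produces a valid feasible $F$ in the joint ambiguity set for every $\boldsymbol x$, and only after taking $\inf_{\boldsymbol x}$ do we invoke the single-product optimization. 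A second point worth a remark is that the joint worst-case distribution need not be unique and the comonotone one is merely \emph{a} maximizer; that suffices for the value statement. I would also note explicitly that no independence assumption is needed, in contrast to \cite{CRZ11}—the separation is driven entirely by coherence (subadditivity) in one direction and comonotonic additivity in the other.
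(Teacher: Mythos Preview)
Your proposal is correct and follows essentially the same argument as the paper: subadditivity of $\rho_h$ for the upper bound, and comonotonic coupling of the single-product worst-case marginals together with comonotonic additivity of $\rho_h$ for the matching lower bound. One small simplification: you do not need Lemma~\ref{lm-worstdf} in the coupling step, since \emph{every} quantile function is nondecreasing---the paper simply sets $F^*(\boldsymbol s)=\min_i F_i^*(s_i)$ and observes that the resulting $S_i^*$ (and hence the $\psi_i^*$, each a nonincreasing function of $S_i^*$) are comonotonic.
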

\begin{proof}[Proof.]
By the subadditivity of $ \rho_h$, it holds
$ \rho_h^F\left(\sum_{i=1}^n\psi _i (x_i,p_i,c_i)\right)
\le \sum_{i=1}^n\rho_h^F\left(\psi _i (x_i,p_i,c_i)\right),$ 
 and thus, 
\begin{equation}\label{20-414-1}
\rho_h^{\rm wc}(\boldsymbol{\mu},\boldsymbol{\sigma}) \le \sum_{i=1}^n \rho_h^{\rm wc}(\mu_i,\sigma_i).
\end{equation}
For $i\in[n]$,  consider the problem $\sup_{F\in {\cal S}(\mu_i,\sigma_i)} \rho_h^F\left(\psi _i (x_i,p_i,c_i)\right) $ and
let $F^*_i$ be one of its worst-case distributions. Define $F^*(\boldsymbol{s}) =\min\{F_1^*(s_1),\ldots, F_n^*(s_n)\}$,~$\boldsymbol{s}\in \R_+^n.$ 
Let $\boldsymbol{S}^*=(S_1^*,\ldots,S_n^*)$ be a random variable such that its distribution is $F^*$. That is, $(S_1^*,\ldots,S_n^*)$ are comonotonic and $S_i^*\sim F_i^*$, $i\in[n]$. Then one can verify that $ \psi _i^* (x_i,p_i,c_i)=p_i\max\{-S_i^*,-x_i\} + c_ix_i$, $i\in[n]$ are  comonotonic. Then by the comonotonic addivity of the $\rho_h$, we have
$$ 
 \sup_{F\in \cal S(\boldsymbol{\mu},\boldsymbol{\sigma})}\rho_h^F\left(\sum_{i=1}^n\psi _i (x_i,p_i,c_i)\right)
 \ge  \rho_h^{F^*}\left(\sum_{i=1}^n\psi _i (x_i,p_i,c_i)\right) = \sum_{i=1}^n\rho_h^{F_i^*}\left(\psi _i^* (x_i,p_i,c_i)\right)=\sum_{i=1}^n \rho_h^{\rm wc}(\mu_i,\sigma_i).
$$  
This combined with \eqref{20-414-1} implies \eqref{20-414-2}. Then the value of $ (x_1^*,\ldots,x_n^*)$ follows from Theorem \ref{main-th}. 
\end{proof}

This separability stems from a distinctive property of distortion functionals: their additivity under comonotonicity allows exact risk aggregation without assuming independence or inducing artificial diversification. As a result, even under arbitrary demand dependencies, multi-product problems reduce to single-item problems—underscoring both the analytical strength and practical scalability of our framework.


\end{document}